\definecolor{darkblue}{rgb}{0.0,0.0,0.7}
\newcommand\field[1]{\mathbb{#1}}
\newcommand\CC{\field{C}}
\newcommand\NN{\field{N}}
\newcommand\RR{\field{R}}
\newcommand\Bb{\mathcal B}
\newcommand\Dd{\mathcal D}
\newcommand\Ff{\mathcal F}
\newcommand\go{G^{(0)}}
\renewcommand\ker{\operatorname{ker}}
\newcommand\lsp{\operatorname{span}}
\newcommand\supp{\operatorname{supp}}
\newcommand{\inv}{^{-1}}
\newcommand\I[1]{\operatorname{Ideal}[#1]}
\newcommand{\minimatrix}[4]{\bigl(\begin{smallmatrix}#1 & #2 \\ #3 & #4\end{smallmatrix} \bigr)}
\newcommand{\cs}{\ensuremath{C^{*}}}
\theoremstyle{plain}
\newtheorem{theorem}{Theorem}[section]
\newtheorem*{theorem*}{Theorem}
\newtheorem*{prop*}{Proposition}
\newtheorem{cor}[theorem]{Corollary}
\newtheorem{lem}[theorem]{Lemma}
\newtheorem{prop}[theorem]{Proposition}
\theoremstyle{remark}
\newtheorem{rmk}[theorem]{Remark}
\theoremstyle{definition}
\numberwithin{equation}{section}
\title{Purely infinite  $\cs$-algebras associated to \'etale groupoids}
\author[J. Brown]{Jonathan Brown}
\address{Jonathan Brown\\Mathematics Department\\
    Kansas State University\\
138 Cardwell Hall\\
Manhattan, KS 66506-2602\\
USA.}
\email{brownjh@math.kansas.edu}
\author[L. O. Clark]{Lisa Orloff Clark}
\address{Lisa Orloff Clark\\Department of Mathematics and Statistics\\
University of Otago\\
 PO Box 56\\ Dunedin
Dunedin 9054\\
New Zealand.}
\email{lclark@maths.otago.ac.nz}
\author{Adam Sierakowski}
\address{Adam Sierakowski\\
    School of Mathematics and Applied Statistics\\
    University of Wollongong\\
    NSW 2522\\
    Australia}
\email{asierakowski@uow.edu.au}
\date{\today}
\thanks{This research was supported by the Australian Research Council.}
\subjclass[2010]{46L05 (Primary); 22A22 (Secondary)}
\keywords{Groupoid; groupoid $\cs$-algebra; purely infinite $\cs$-algebra; strongly purely infinite; topologically principal; $k$-graph.}
\begin{document}

\maketitle

\begin{abstract}
Let $G$ be
a Hausdorff, \'etale groupoid that is minimal and topologically principal.
We show  that $\cs_r(G)$ is purely infinite simple  if and only if all the nonzero
positive elements of $C_0(\go)$ are infinite in $C_r^*(G)$.
If $G$ is a Hausdorff, ample groupoid, then we show that $C^*_r(G)$ is
purely infinite simple if and only if every nonzero projection in $C_0(\go)$ is infinite in $C^*_r(G)$.
We then show how this result applies to $k$-graph $\cs$-algebras.  Finally, we investigate strongly
purely infinite groupoid $C^*$-algebras.
\end{abstract}

\section{Introduction}

Purely infinite simple $C^*$-algebras were introduced by Cuntz in \cite{cuntz81} where he showed that the $K_0$
group of such algebras can be computed within the algebra itself without resorting to the usual direct
limit construction. The $K$-theory groups of $C^*$-algebras have long been known to be computable
invariants and Cuntz's result shows that this computation is easier when the $C^*$-algebra is purely
infinite simple.  Elliott initiated a program to find a suitably large class of
$C^*$-algebras on which the $K$-theory groups provide a complete isomorphism invariant (see \cite{ell93}).  This program
has achieved remarkable success, most notably in a theorem of Kirchberg and Phillips \cite{Kirchberg,Phil00} which
states that every  \emph{Kirchberg algebra} satisfying
the Universal Coefficient Theorem (UCT) is classified by the isomorphism classes of its ordered $K$-theory groups.
A Kirchberg algebra is a separable, nuclear, purely infinite simple $C^*$-algebra.

The allure of classification via the Kirchberg-Phillips theorem has lead many authors to
study when various constructions of $C^*$-algebras yield purely infinite simple algebras.  Kumjian, Pask and
Raeburn show that a graph $C^*$-algebra of a cofinal graph is purely infinite simple if and only if
every vertex can be reached from a loop with an entrance \cite[Theorem~3.9]{KPR98}. Carlsen and Thomsen show that if the
$C^*$-algebra constructed from a locally injective surjection $\theta$ on a compact metric space of
finite covering dimension is simple, then it is purely infinite simple if and only if $\theta$ is not a
homeomorphism \cite[Corollary~6.6]{CT}. R{\o}rdam and Sierakowski \cite{RS12} show that if a
countable exact group $H$ acts by an essentially free action on the Cantor set $X$
and the type semigroup of clopen subsets of $X$ is almost unperforated, then $C_0(X)\rtimes_r H$ is purely
infinite if and only if every clopen set $E$ in $X$ is paradoxical. The constructions in each of the above
examples are special cases of groupoid $C^*$-algebras.

In this paper we investigate purely infinite $C^*$-algebras associated
to Hausdorff, \'etale groupoids.   In sections~3 through 5 we restrict our attention to simple groupoid $C^*$-algebras.
Characterising simplicity of groupoid $C^*$-algebras is known
and we readily make use of the following theorem from \cite[Theorem~5.1]{BCFS}:
\begin{theorem}[Brown-Clark-Farthing-Sims]
Let $G$ be a second-countable, locally compact, Hausdorff and \'etale groupoid. Then
$C^*(G)$ is simple if and only if all of the following conditions are satisfied.
\begin{enumerate}
\item\label{it:metrically amenable} $C^*(G) = C^*_r(G)$;
\item\label{it:discretely trivial} $G$ is topologically principal;
\item\label{it:minimal} $G$ is minimal.
\end{enumerate}

\end{theorem}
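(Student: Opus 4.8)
The plan is to prove the two implications separately; the direction $(\Leftarrow)$ contains essentially all of the work. Assume (1)--(3). By (1) it suffices to show that $C^*_r(G)$ is simple, and for this I would use two standard features of a Hausdorff \'etale groupoid: the faithful conditional expectation $\Phi\colon C^*_r(G)\to C_0(\go)$ extending the restriction map $f\mapsto f|_{\go}$ on $C_c(G)$, and the fact that $C_c(\go)$ furnishes an approximate identity for $C^*_r(G)$. So let $I$ be a nonzero closed two-sided ideal; the goal is $I=C^*_r(G)$, and I would reach it in two steps.

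First, the \emph{intersection property} $I\cap C_0(\go)\neq 0$: pick a nonzero positive $a\in I$, so $\Phi(a)\neq 0$ with $M:=\|\Phi(a)\|>0$. Given small $\varepsilon>0$, approximate $a$ within $\varepsilon$ by a self-adjoint $b\in C_c(G)$ and write $b=\Phi(b)+\sum_{i=1}^{n}b_i$ with each $b_i\in C_c(B_i)$ for open bisections $B_i$ whose closures avoid $\go$. Using topological principality, choose a unit $x$ with trivial isotropy at which $\Phi(a)$ is within $\varepsilon$ of $M$. Since the isotropy at $x$ is trivial, the element of each $B_i$ lying over $x$ (when there is one) has distinct source and range, so by continuity of $s,r$, Hausdorffness, and refining the cover, there is a neighbourhood $W\ni x$ with $s(B_i\cap r^{-1}(W))\cap W=\emptyset$ for every $i$. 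For any $h\in C_c(W)$ with $0\le h\le 1$ and $h(x)=1$ this forces $hb_ih=0$, hence $hbh=h^2\Phi(b)$, so $hah\in I$ lies within $2\varepsilon$ of the positive element $h^2\Phi(a)\in C_0(\go)$, whose norm is close to $M$; a routine perturbation argument (if $u,v\ge 0$ and $\|u-v\|<\delta$ then $(v-\delta)_+$ is Cuntz-subequivalent to $u$) then places a nonzero positive element of $C_0(\go)$ inside $I$. Second, I would invoke minimality: writing $I\cap C_0(\go)=C_0(U)$ for a nonempty open $U\subseteq\go$, conjugation by elements $1_B$ for open bisections $B$ shows $U$ is invariant (if $s(g)\in U$ and $f\in C_0(U)$ with $f(s(g))\neq 0$, then $1_Bf1_B^{*}\in I\cap C_0(\go)$ is nonzero at $r(g)$), so $U=\go$ by minimality; thus $C_0(\go)\subseteq I$ and, since $C_0(\go)$ contains an approximate identity, $I=C^*_r(G)$. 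Hence $C^*(G)=C^*_r(G)$ is simple.

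For $(\Rightarrow)$, assume $C^*(G)$ is simple. The canonical surjection $C^*(G)\to C^*_r(G)$ has proper kernel, which simplicity forces to be $0$; this is (1). If $G$ were not minimal there would be a nonempty proper open invariant $U\subseteq\go$, and the ideal $C^*(G|_U)$ appearing in $0\to C^*(G|_U)\to C^*(G)\to C^*(G|_{\go\setminus U})\to 0$ would be a nonzero proper ideal, a contradiction, so (3) holds. Finally, suppose $G$ were not topologically principal; using second-countability this means $\interior(\operatorname{Iso}(G))$ properly contains $\go$, so there is an open bisection $U$ with $\overline{U}$ a compact subset of $\interior(\operatorname{Iso}(G))\setminus\go$ and, after shrinking, $s(U)=r(U)=:V$. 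Since $U$ lies in the isotropy bundle, $s|_U=r|_U$, so $1_U$ is a partial isometry in the corner $1_VC^*(G)1_V$ that commutes with $C_0(V)$ and does not lie in $C_0(\go)$; the unital $C^*$-subalgebra it generates with $C_0(V)$ is commutative, hence admits at least two distinct characters, and since $G$ is minimal (by the case of (3) just proved) the projection $1_V$ is full, so these extend to inequivalent irreducible representations of $C^*(G)$, contradicting simplicity. Thus (2) holds.

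The main obstacle is the intersection-property step in $(\Leftarrow)$ --- extracting a nonzero positive element of $C_0(\go)\cap I$ from an arbitrary positive element of $I$. This is precisely where topological principality enters, and the delicate point is arranging a \emph{single} bump function $h$ that simultaneously separates the source and range of \emph{every} off-diagonal bisection in the compact support of the approximant $b$ while also keeping $\Phi(a)$ large on $\operatorname{supp}(h)$; these joint requirements are what force one to use density of the trivial-isotropy units rather than mere existence. On the converse side the only non-formal point is manufacturing the inequivalent representations from a bisection in the interior of the isotropy bundle.
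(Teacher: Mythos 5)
The paper does not prove this statement itself; it quotes it from \cite{BCFS} (Theorem~5.1), so the only in-paper material to compare against is that citation, together with Lemma~\ref{lem:star}, whose proof runs exactly parallel to your intersection-property step. Your $(\Leftarrow)$ direction is essentially the standard argument and is sound: faithfulness of the expectation, approximation by $b\in C_c(G)$, a bump function $h$ killing the off-diagonal part near a trivial-isotropy unit where $\Phi(a)$ is large (this is precisely \cite[Lemma~2.3]{A-D97}, and it does need compactness of $\supp(b-\Phi(b))$, not just continuity of $r,s$), the Kirchberg--R{\o}rdam perturbation lemma \cite[Lemma~2.2]{KR-infty}, invariance of the open set supporting $I\cap C_0(\go)$, minimality, and the approximate unit in $C_c(\go)$. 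One repair: for an \'etale groupoid that is not ample, the symbols $1_B$, $1_U$, $1_V$ denote nothing in $C^*_r(G)$; you must conjugate by elements of $C_c(B)$ instead. Items (1) and (3) of the forward direction are also fine.

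The genuine gap is in your proof that simplicity forces topological principality. Granting the (unproved, but correct and genuinely second-countability-dependent) Baire-category step that non-principality yields an open bisection $U\subseteq\interior(\operatorname{Iso}(G))\setminus\go$, your concluding inference --- that the commutative subalgebra generated by $C_c(U)$ and $C_0(V)$ has two distinct characters, which ``extend to inequivalent irreducible representations of $C^*(G)$, contradicting simplicity'' --- is a non sequitur. Simplicity means every nonzero representation is faithful, not that irreducible representations are unique up to equivalence: $M_2(\CC)$ is simple and contains the diagonal subalgebra with two characters, and $\Oo_2$ has uncountably many inequivalent irreducible representations. To contradict simplicity you would need the two extensions to have \emph{different kernels}, and nothing in your construction produces that. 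What you have actually shown is that $C_0(\go)$ fails to be maximal abelian in $C^*_r(G)$, which is a strictly weaker conclusion. The implication ``simple $\Rightarrow$ topologically principal'' is the substantive content of \cite{BCFS}: one must genuinely manufacture a nonzero proper ideal (equivalently, a nonzero non-faithful representation) from the bisection in the interior of the isotropy bundle, and that construction is missing here.
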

However, necessary and sufficient conditions on the groupoid for the associated algebra to be
\emph{purely infinite} simple are not known.  Anantharaman-Delaroche showed that
`locally contracting' is a sufficient condition on the groupoid in \cite[Proposition~2.4]{A-D97} but
whether locally contracting is {necessary} remains an open question.
Part of the difficulty in characterizing those groupoids that give rise to purely infinite simple $C^*$-algebras
is relating arbitrary projections in the groupoid $C^*$-algebra to the groupoid itself.

In Section~\ref{sec:topprin} we look at a necessary and sufficient conditions for ensuring
 pure infiniteness of groupoid $\cs$-algebras.  We show that for a Hausdorff, \'etale, topologically principal, and minimal groupoid $G$ the \cs-algebra
$\cs_r(G)$ is purely infinite simple  if and only if it all the nonzero
positive elements of $C_0(\go)$ are infinite in $C_r^*(G)$ and (see Theorem~\ref{thm:general}).  In Section~4 we specialize to Hausdorff,
\emph{ample} groupoids.
This is an important class of examples because every Kirchberg algebra in UCT is Morita equivalent to an
algebra associated to an Hausdorff, ample groupoid (see \cite{spielberg2007}).
We show in Theorem~\ref{thm:totdis} for a Hausdorff, ample groupoid $G$, that is also topologically principal and
minimal, the \cs-algebra $C^*_r(G)$ is purely infinite if and only if every nonzero projection in $C_0(\go)$ is infinite in $C^*_r(G)$.
Theorem~\ref{thm:totdis} is a generalisation of \cite{GS} about partial actions.
In Section~\ref{Sec:kgraph} we demonstrate how Theorem~\ref{thm:totdis} applies to $k$-graph $\cs$-algebras.

In Section~\ref{sec:NS} we turn our attention to the non-simple case.  In \cite{KR-infty},
Kirchberg and R{\o}rdam introduce three separate notions of purely infinite $C^*$-algebras: weakly purely infinite,
purely infinite and strongly purely infinite.  Of these notions, the last one appears to be the
most useful in the classification theory of non-simple $\cs$-algebras.  Indeed, Kirchberg showed in \cite{Kir04} that two separable,
nuclear, strongly purely infinite $C^*$-algebras with the same primitive ideal space $X$ are isomorphic if and only if they are
$KK_X$-equivalent.  We provide a characterization of when groupoid $\cs$-algebras are strongly purely infinite in Proposition~\ref{cor.1.2}.


\section{Preliminaries}

\subsection{Groupoids}A  {\em groupoid} $G$ is a small category in which every morphism is invertible.
The set of objects in $G$ is identified with the set of identity morphisms and both are denoted by $\go$.
We call $\go$ the {\em unit space} of $G$. Each morphism $\gamma$ in the category has a range and source denoted
$r(\gamma)$ and $s(\gamma)$ respectively and thus $r$ and $s$ define maps $G\to \go$.

A {\em topological groupoid} is a groupoid with a topology in which composition is continuous and inversion
is a homeomorphism. An {\em open bisection} in a topological groupoid $G$ is an open set $B$ such that both
 $r$ and $s$ restricted to $B$ are homeomorphisms; in particular these restrictions are injective.
An {\em \'etale groupoid} is a topological groupoid where $s$ is a local homeomorphism.
If a groupoid $G$ is Hausdorff and \'etale, then the unit space $\go$ is open and closed in $G$.
If $G$ is a locally compact, Hausdorff groupoid, then $G$ is \'etale if and only if
there is a basis for the topology on
$G$ consisting of open bisections with compact closure.
A topological groupoid is called \emph{ample} if it has a basis of compact open bisections.   If $G$ is
locally compact, Hausdorff and \'etale groupoid, then we note that $G$ is ample if and only if $\go$ is
totally disconnected (see
\cite[Proposition~4.1]{Exel10}).

For a subsets $L,K\subseteq G$, denote $LK=\{\gamma: \gamma=\xi\zeta\quad{\text{with~}} \xi\in L,
\zeta\in K, s(\xi)=r(\zeta)\}$. With a slight abuse of notation, for $u\in \go$,
we write $uG$ and $Gu$ for $\{u\}G$ and $G\{u\}$ respectively and denote by $uGu$  the
set \[\{\gamma\in G: r(\gamma)=s(\gamma)=u\}.\]
A  topological groupoid $G$ is {\em topologically principal} if the set $\{u\in \go: uGu=\{u\}\}$
is dense in $\go$, and {\em minimal} if $G\cdot u:=\{r(\gamma): s(\gamma)=u\}$ is dense in $\go$ for
all $u\in \go$. Recall, for a second countable, locally compact, Hausdorff, \'etale groupoid $G$ the algebra $C^*(G)$ is
simple if and only if $G$ is minimal, topologically principal, and $C^*(G)=C^*_r(G)$.

\subsection{Groupoid $C^*$-algebras}Let $G$ be locally compact, Hausdorff
 \'etale groupoid and let $C_c(G)$
denote the set of compactly supported continuous functions from $G$ to $\CC$.   Since every element $\gamma$ of $G$
has a neighbourhood $B_\gamma$ such that $r|_{B_\gamma}$ is injective, the set $r\inv(u)$ is discrete for
every $u\in \go$.  Thus if $f\in C_c(G)$ then $\supp(f)\cap r\inv(u)$ is finite for all $u\in\go$.  With this,
we are able to define a convolution and involution on $C_c(G)$ such that  for $f, g\in C_c(G)$,
\[
f*g(\gamma):=\sum_{r(\eta)=r(\gamma)} f(\eta)g(\eta\inv \gamma)\quad\text{and}\quad f^*(\gamma):=\overline{f(\gamma\inv)}.
\]
Under these operations, $C_c(G)$ is a $*$-algebra.  Next define for $f\in C_c(G)$,
\begin{align*}
\|f\|_I: &=\sup_{u\in \go}\{\max \{\sum_{\gamma\in Gu} |f(\gamma)|, \sum_{\gamma\in uG} |f(\gamma)|\}\} \quad\text{and}\\
\|f\|: &= \sup\{ \|\pi(f)\|: \pi\text{~ is a $\|\cdot\|_I$-decreasing representation}\}.
\end{align*}
Then $C^*(G)$ is the completion of $C_c(G)$ in the $\|\cdot\|$-norm.

Given a unit $u\in \go$, the regular representation
$\pi_u$ of $C_c(G)$ on $\ell^2(Gu)$ associated to $u$ is characterized by
\[
\pi_u(f)\delta_\gamma=\sum_{s(\eta)=r(\gamma)}f(\eta) \delta_{\eta\gamma}.
\]
The reduced $C^*$-norm on $C_c(G)$ is $\|f\|_r=\sup\{ \|\pi_u(f)\|: u\in \go\}$ and $C^*_r(G)$ is the completion
of $C_c(G)$ in the $\|\cdot\|_r$-norm.  Our attention will be focused on the
reduced $\cs$-algebra and situations where the reduced and full algebras coincide.
Also, we will often be assuming $G$ is second-countable, implying $\cs_r(G)$ is separable \cite[Page~59]{Renault}.

Below we recall a few standard results (we heavily use) and their proofs to familiarise the reader with locally compact, Hausdorff \'etale groupoids.

\begin{lem}[{cf.\ \cite{Renault}}]
\label{lem:new}
Let $G$ be a locally compact, Hausdorff and \'etale groupoid. Then
\begin{enumerate}
\item \label{it1:review} The extension map from $C_c(\go)$ into $C_c(G)$ (where a function is defined to be zero on $G-\go$) extends to an
embedding of  $C_0(\go)$ into  $C^*_r(G)$.
\item\label{it2:review} The restriction map $E_0: C_c(G)\to C_c(\go)$ extends to a  conditional
expectation\footnote{Recall: A {\em conditional expectation} $E\colon A \to B$ is a contractive,
linear, completely positive map such that for every $b\in B, a\in A$ we have
  $E(b)=b$, $E(ba)=bE(a)$ and
$E(ab)=E(a)b$ , see \cite[II.6.10.1]{Bla}.} $E: C^*_r(G)\to C_0(\go)$.
\item\label{it3:review} The map $E$ from item (\ref{it2:review}) is faithful.  That is,
$E(b^*b)=0$ implies $b=0$ for $b \in \cs_r(G)$.
\item\label{it4:review} For every closed invariant set $D \subseteq \go$ we have the following commuting diagram:
\begin{equation*}
\label{com diag000}
\xymatrix{0 \ar[r] & C^*_r(G|_{U}) \ar[d]_{E_{U}} \ar[r]^-{\iota_r} & C^*_r(G) \ar[d]_E \ar[r]^-{\rho_r}& C^*_r(G|_{D}) \ar[d]_{E_{D}} \ar[r]& 0\\
0 \ar[r] & C_0({U}) \ar[r]^-{\iota_0} & C_0(\go) \ar[r]^-{\rho_0}& C_0({D}) \ar[r]& 0},
\end{equation*}
where $U = \go - D$, $\iota_r$ and $\rho_r$ are determined on continuous functions by extension and restriction respectively.
Moreover, $\operatorname{image}(\iota_r)\subseteq \ker\rho_r$.
\item\label{it5:review} The subalgebra $C_c(\go)$ contains an approximate unit for $C^*_r(G)$.
\end{enumerate}
\end{lem}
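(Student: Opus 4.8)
The common thread for all five items is that, because $G$ is Hausdorff and \'etale, the unit space $\go$ is clopen in $G$, so extension-by-zero carries $C_c(\go)$ into $C_c(G)$ and restriction carries $C_c(G)$ onto $C_c(\go)$; moreover, for $f,g\in C_c(\go)$ regarded inside $C_c(G)$ a one-line computation shows that only the term $\eta=r(\gamma)$ survives in the convolution, so $f*g=fg$ and $f^*=\bar f$. Thus the extension map is a $*$-homomorphism $C_0(\go)\to \cs_r(G)$ for the pointwise operations; being a $*$-homomorphism of $\cs$-algebras it is contractive, and it is injective because for $0\neq f\in C_0(\go)$ and $u\in\go$ with $f(u)\neq 0$ one has $\pi_u(f)\delta_u=f(u)\delta_u$, whence $\|\pi_u(f)\|\ge|f(u)|$; an injective $*$-homomorphism between $\cs$-algebras is isometric, giving item~\ref{it1:review}.

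For items~\ref{it2:review} and~\ref{it3:review} I would first record the identity $E_0(f)(u)=f(u)=\langle \pi_u(f)\delta_u,\delta_u\rangle$ for $f\in C_c(G)$ and $u\in\go$, again because only $\eta=u$ contributes to $\langle\pi_u(f)\delta_u,\delta_u\rangle$. This yields $\|E_0(f)\|_\infty\le\|f\|_r$, so $E_0$ extends to a contraction $E\colon \cs_r(G)\to C_0(\go)$ satisfying $E(a)(u)=\langle\pi_u(a)\delta_u,\delta_u\rangle$ for every $a$. Checking the module identities on $C_c(G)$ (where, for $g\in C_c(\go)$, $(g*f)|_{\go}=g\cdot f|_{\go}$ and similarly on the right) and invoking Tomiyama's theorem for a contractive idempotent onto a $\cs$-subalgebra to obtain complete positivity, one concludes $E$ is a conditional expectation. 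For faithfulness, if $E(b^*b)=0$ then $\|\pi_u(b)\delta_u\|^2=\langle\pi_u(b^*b)\delta_u,\delta_u\rangle=0$ for all $u$. To pass from this to $\pi_u(b)=0$, I would, for each $\gamma\in Gu$, introduce the unitary $V_\gamma\colon \ell^2(Gr(\gamma))\to\ell^2(Gu)$, $\delta_\xi\mapsto\delta_{\xi\gamma}$ (coming from the bijection $Gr(\gamma)\to Gu$, $\xi\mapsto\xi\gamma$), verify $\pi_u(f)V_\gamma=V_\gamma\pi_{r(\gamma)}(f)$ on $C_c(G)$ and hence on all of $\cs_r(G)$, and conclude $\pi_u(b)\delta_\gamma=V_\gamma\pi_{r(\gamma)}(b)\delta_{r(\gamma)}=0$; since $\gamma\in Gu$ and $u\in\go$ are arbitrary, $\pi_u(b)=0$ for all $u$ and so $b=0$.

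For item~\ref{it4:review}, put $U=\go\setminus D$ (open and invariant) and let $G|_U$, $G|_D$ be the corresponding open and closed reductions; the vertical maps are the conditional expectations of item~\ref{it2:review} for the three groupoids, the bottom row is the usual short exact sequence of commutative $\cs$-algebras, and $\iota_r$, $\rho_r$ are induced by extension-by-zero and restriction on $C_c$. Commutativity of the two squares is verified on $C_c$, where each composite is just ``restrict to units, then extend (resp.\ restrict)'' in one order or the other and the two orders visibly coincide. The containment $\operatorname{image}(\iota_r)\subseteq\ker\rho_r$ holds because the zero-extension of a function in $C_c(G|_U)$ is supported off $G|_D$ (as $U\cap D=\varnothing$ with both sets invariant) and hence restricts to $0$ there; injectivity of $\iota_r$ follows from invariance of $U$ --- which gives that the arrows of $G|_U$ into $u\in U$ are exactly those of $G$, so $\pi_u$ restricts to the regular representation of $G|_U$ there, while $\pi_u(f)=0$ for $u\in D$ and $f\in C_c(G|_U)$, making $\iota_r$ isometric --- and surjectivity of $\rho_r$ from the fact that $C_c(G)$ restricts onto $C_c(G|_D)$ ($G|_D$ being closed) together with closedness of the range of a $*$-homomorphism; the exactness of the top row at the middle term is \cite{Renault}. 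Finally, for item~\ref{it5:review}, take an approximate unit $(e_i)$ for $C_0(\go)$ with $e_i\in C_c(\go)$ and $0\le e_i\le1$, chosen so that for each compact $C\subseteq\go$ one has $e_i|_C\equiv1$ eventually (Urysohn); the computation from the first paragraph gives $e_i*f=(e_i\circ r)f$ and $f*e_i=(e_i\circ s)f$ for $f\in C_c(G)$, so $e_i*f=f=f*e_i$ once $e_i\equiv1$ on the compact set $r(\supp f)\cup s(\supp f)$, and density of $C_c(G)$ together with $\|e_i\|\le1$ promotes this to an approximate unit for $\cs_r(G)$. The bookkeeping in items~\ref{it1:review}, \ref{it2:review}, and~\ref{it5:review} is routine; the steps carrying real content are the faithfulness argument in item~\ref{it3:review}, where the left-translation intertwiners $V_\gamma$ are the crucial device, and the exactness of the reduced sequence in item~\ref{it4:review}, which is where we lean on \cite{Renault}.
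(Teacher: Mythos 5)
Your items (\ref{it1:review}), (\ref{it2:review}), (\ref{it4:review}) and (\ref{it5:review}) follow essentially the paper's route: clopenness of $\go$ makes extension and restriction well defined on $C_c$, convolution of unit-space functions is pointwise multiplication, $E_0$ is a norm-one positive idempotent so Tomiyama's theorem applies, commutativity of the diagram is checked on $C_c$ and passed to completions, and the approximate unit consists of Urysohn functions equal to $1$ on large compact subsets of $\go$. (One small gloss in item (\ref{it1:review}): at the point where you invoke ``a $*$-homomorphism of $C^*$-algebras is contractive'', the map is only defined on the dense subalgebra $C_c(\go)$, so automatic contractivity is not yet available; you need the elementary estimate $\|f\|_r\le\|f\|_I\le\|f\|_\infty$ for $f\in C_c(\go)$ before extending -- the paper is equally terse here.) For item (\ref{it3:review}) you take a genuinely different route. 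The paper, for each $\gamma$, chooses a bump function $\phi_{\gamma,B}$ supported in a bisection $B\ni\gamma$, shows $E(\phi_{\gamma,B}^*b^*b\phi_{\gamma,B})=0$ by a direct estimate, and identifies $\|\pi_u(b)\delta_\gamma\|^2$ with that expectation evaluated at $s(\gamma)$ via rank-one isometries $\CC\to\ell^2(Gs(\gamma))$ (its $V_\gamma$ is not your $V_\gamma$). You instead use $E(a)(u)=\langle\pi_u(a)\delta_u,\delta_u\rangle$ together with the right-translation unitaries $\ell^2(Gr(\gamma))\to\ell^2(Gs(\gamma))$, $\delta_\xi\mapsto\delta_{\xi\gamma}$, which intertwine $\pi_{r(\gamma)}$ and $\pi_{s(\gamma)}$, to get $\|\pi_u(b)\delta_\gamma\|=\|\pi_{r(\gamma)}(b)\delta_{r(\gamma)}\|=E(b^*b)(r(\gamma))^{1/2}=0$. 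Both arguments are correct; yours avoids the bump functions entirely and makes explicit the standard fact that the regular representations along an orbit are unitarily equivalent, which is arguably cleaner.

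One assertion in your item (\ref{it4:review}) must be deleted: ``the exactness of the top row at the middle term is \cite{Renault}.'' This is false in general. For the reduced $C^*$-algebras one only has $\operatorname{image}(\iota_r)\subseteq\ker\rho_r$, and the reverse containment can fail (this is the non-exactness phenomenon for reduced constructions); indeed, characterising when the reduced sequence is exact is precisely the content of Lemma~\ref{lem.1.1} later in the paper, and would be vacuous if exactness always held. What is exact is the sequence of \emph{full} $C^*$-algebras, by \cite[Lemma~2.10]{MRW96}. Since the lemma you are proving claims only commutativity of the diagram together with $\operatorname{image}(\iota_r)\subseteq\ker\rho_r$, this erroneous side remark does not affect the validity of your proof of the stated result, but it should not stand as written.
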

\begin{proof}
(\ref{it1:review}) Since $G$ is Hausdorff and \'etale, $\go$ is open and closed in $G$. Thus, the map
$C_c(\go)$ into $C_c(G)$ is well defined. For $f, g\in C_c(\go)$,
a quick computation gives
\[
f*g(\gamma)=\begin{cases} f(\gamma)g(\gamma), & \text{if}\quad \gamma\in \go;\\ 0, & \text{otherwise},\end{cases}\]
so the map from $C_c(\go)$ into $C_c(G)$ is a $^*$-homomorphism. We claim the map is isometric, that is, we claim the
reduced norm agrees with the infinity norm for
functions in $C_c(\go)$. By evaluating at point masses in $\ell^2(Gu)$,
one can show that $\|f\|_{\infty}\leq \|f\|_r$ for $f\in C_c(G)$.  The reverse inequality can be verified for $f\in C_c(\go)$ and the claim follows.
Thus the $^*$-homomorphism from $C_c(\go)$ into $C_c(G)\subseteq C^*_r(G)$ extends by continuity to an
 isometric (hence injective) $^*$-homomorphism from $C_0(\go)$ into $C^*_r(G)$.

(\ref{it2:review}) Once again using that $G$ is Hausdorff and \`etale, we have that $\go$ is open and closed in $G$ and hence
$E_0$ is well defined. One may easily verify that $E_0$ is (a) positive (b) linear
 (c) idempotent, and (d) of norm one.
Therefore $E_0$ extends by continuity to a map $E: C^*_r(G)\to C_0(\go)$ with the same properties (a)--(d).
By \cite[II.6.10.1]{Bla} we conclude that $E$ is a conditional expectation.

(\ref{it3:review}) Let $b\in C^*_r(G)$ such that $E(b^*b)=0$. We need to show that $b=0$. Let $V_\gamma: \CC\to \ell^2(G_{s(\gamma)})$ be given by $c\mapsto c\delta_\gamma$.  Then $V_\gamma^*\omega=\omega(\gamma)$.  
Since $\|b\|_r=\sup_{u\in \go}\|\pi_u(b)\|$ and 
\begin{align*}
\|\pi_u(b)\delta_\gamma\|^2&=\langle \pi_u(b)\delta_\gamma, \pi_u(b)\delta_\gamma\rangle=\langle \pi_u(b^*b)\delta_\gamma,\delta_\gamma\rangle=V_\gamma^*\pi_u(b^*b)V_\gamma 1,
\end{align*}
it suffices to show that $V_\gamma^*\pi_u(b^*b)V_\gamma=0$ for all $u\in \go$ and $\gamma\in G$.

For $f\in C_c(G)$, $u\in \go$, and $c\in \CC$, we have
\begin{equation}
\label{Vu}
V_u^*\pi_u(f)V_uc=V_u^*\pi_u(f)c\delta_u=V_u^* (\sum_{s(\eta)=u} f(\eta)c\delta_\eta)=f(u)c=E(f)(u)c.
\end{equation}
Thus by the continuity of $E$, for all $a\in C^*_r(G)$,  $E(a)(u)=V_u^*\pi_u(a)V_u$ as operators on $\CC$.  

For every open bisection $B$ and $\gamma\in B$,  pick a function $\phi_{\gamma,B}\in C_c(G)$ such that $\phi_{\gamma,B}(\gamma)=1$, 
$\supp(\phi_{\gamma,B})\subseteq B$, and $0\leq \phi_{\gamma, B}\leq 1$. Now if $f\in C_c(G)$ and 
$B$ is an open bisection with $\gamma\in B$, then 
\begin{align}
\label{invar}
(E(\phi_{\gamma,B}^*f\phi_{\gamma,B}))(u)&=\sum_{r(\xi)=r(\zeta)=u} \phi_{\gamma,B}(\xi\inv)f(\xi\inv\zeta)\phi_{\gamma,B}( \zeta\inv)\nonumber,\\
\intertext{which is zero unless $\xi, \zeta\in B\inv$. Since $r(\xi)=r(\zeta)=u$, we have that $\xi=\zeta$ is the unique element of $uB\inv$. So}
(E(\phi_{\gamma,B}^*f\phi_{\gamma,B}))(u)&=\phi_{\gamma,B}(\zeta\inv)f(s(\zeta))\phi_{\gamma,B}(\zeta\inv)\leq E(f)(s(\zeta))\leq \|E(f)\|_{\infty}.
\end{align}
Now if $a\in C^*_r(G)$ then $\phi_{\gamma,B}^*a^*a\phi_{\gamma,B}$ is positive so $E(\phi_{\gamma,B}^*a^*a\phi_{\gamma,B})\geq 0$.  Therefore by 
the continuity of $E$ we can apply \eqref{invar} to obtain 
\[
0\leq E(\phi_{\gamma,B}^*b^*b\phi_{\gamma,B})\leq \|E(b^*b)\|_{\infty}=0.
\]
Thus $E(\phi_{\gamma,B}^*b^*b\phi_{\gamma,B})=0$ for all open bisections $B$ and $\gamma\in B$.

For $\gamma\in G$ pick an open bisection $B$ such that $\gamma\in B$.  Notice for $c\in \CC$
\[
\pi_{s(\gamma)}(\phi_{\gamma,B})V_{s(\gamma)}c=\pi_{s(\gamma)}(\phi_{\gamma,B})c\delta_{s(\gamma)}=\sum_{s(\eta)=s(\gamma)}\phi_{\gamma,B}(\eta)c\delta_\eta=c\delta_\gamma=V_\gamma c.
\]
Thus $\pi_{s(\gamma)}(\phi_{\gamma,B})V_{s(\gamma)}=V_\gamma$ as operators.  Now by equation~\eqref{Vu}  and the above observation we get for all $\gamma\in G$ that
\[
V_\gamma^*\pi_u(b^*b)V_\gamma= V_{s(\gamma)}^*\pi_{s(\gamma)}(\phi_{\gamma,B}^*b^*b\phi_{\gamma,B})V_{s(\gamma)}^*=E(\phi_{\gamma,B}^*b^*b\phi_{\gamma,B})=0
\]
as desired.  Therefore $b=0$ and hence $E$ is faithful. 

(\ref{it4:review}) The diagram commutes when restricting to continuous functions with compact support.
 Commutativity then passes to the respective completions by continuity. Since we know
$\rho_r(\iota_r(f))=0$ for all $f\in C_c(G|_U)$ we obtain $\operatorname{image}(\iota_r)\subseteq \ker\rho_r$ by continuity.

(\ref{it5:review}) Let $\mathcal{C}$ be the set of compact sets in $\go$ ordered by inclusion.  
For each $C\in \mathcal{C}$  pick a function $e_C$ in $C_c(\go)$ such that $0\leq e_c\leq 1$ and $e_C|_C\equiv 1$.  
Fix $f\in C_c(G)$ vanishing outside a compact set $K\subseteq G$.  For $C$ such that $s(K)\subset C$, $f*e_c=f$.  
It follows that $(e_C)$ is an approximate unit for $C^*_r(G)$.
\end{proof}


\subsection{Purely infinite simple $C^*$-algebras}

Given a $C^*$-algebra $A$ we denote its positive elements by $A^+$.  If $B$ is a subalgebra
of $A$ then $B^+\subseteq A^+$.  In particular, if $C_0(X)$ is an abelian subalgebra of $A$
and $f\in C_0(X)$ such that $f(x)\geq 0$ for all $x\in X$, then $f\in A^+$.

For positive elements $a\in M_n(A)$
and $b\in M_m(A)$, $a$ is {\em Cuntz below} $b$, denoted $a\precsim b$, if there exists a
sequence of elements $x_k$ in $M_{m,n}(A)$ such that $x_k^* b x_k\to a$ in norm. Notice
that $\precsim$ is transitive: if $a\precsim b$ and $b\precsim c$ there exist sequences
of element $x_n$ and  $y_n$ such that $x_n^* b x_n\to a$ and $y_n^* c y_n\to b$ in norm,
so $x_n^*y_n^* c y_n x_n\to a$ in norm, that is $a\precsim c$.    We say $A$ is {\em purely infinite}
if there are no characters on $A$ and for all $a,b\in A^+$, $a\precsim b$ if and only if
$a\in \overline{AbA}$ \cite[Definition~4.1]{KR}. A nonzero positive element $a\in A$ is
{\em properly infinite}  if $a\oplus a\precsim a$.  By \cite[Theorem~4.16]{KR}  $A$ is
purely infinite if and only if every nonzero positive element in $A$ is properly infinite.

A projection $p$ in a $C^*$-algebra $A$ is {\em infinite} if it is Murray-von Neumann
equivalent to a proper subprojection of itself, i.e., if there exists a partial isometry
$s$ such that $s^*s=p$ but $ss^*\lneq p$. By \cite[Proposition~4.7]{KR} a $C^*$-algebra $A$
is purely infinite if every nonzero hereditary $\cs$-subalgebra in every quotient of $A$
contains an infinite projection. For simple $\cs$-algebras the converse is also true, thus
a simple $C^*$-algebra is purely infinite  precisely when every hereditary subalgebra
contains an infinite projection.

\section{Topologically principal groupoids and positive elements of $C_0(\go)$}\label{sec:topprin}

In this section we consider, locally compact, Hausdorff and \'etale
groupoids.
We will show that we can determine when $\cs_r(G)$ is purely infinite simple by
restricting our attention to
elements of $C_0(\go)$ (see  Theorems~\ref{thm:totdis} and \ref{thm:general}).
Before we do that, we need the following technical lemmas.

\begin{lem}
\label{lem: nice func}
 Let $G$ be a locally compact, Hausdorff and \'etale groupoid and
$E:\cs_r(G) \to C_0(\go)$ be the faithful conditional expectation extending restriction.  Suppose that
$G$ is topologically principal.
 \label{lem:star}  For every $\epsilon >0$ and $c \in \cs_r(G)^+$, there exists $f \in C_0(\go)^+$ such that:
\begin{enumerate}
 \item \label{it:*1} $\|f\| = 1$;
 \item \label{it:*2} $\|fcf - fE(c)f\| < \epsilon$;
 \item \label{it:*3} $\|fE(c)f\| > \|E(c)\| - \epsilon.$
\end{enumerate}
\end{lem}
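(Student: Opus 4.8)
The plan is to exploit topological principality to find a unit $u_0 \in \go$ where $E(c)$ nearly attains its norm and at which $u_0 G u_0 = \{u_0\}$, and then to build $f$ as a bump function supported on a small neighbourhood $N$ of $u_0$ in $\go$. The point is that conjugating $c$ by such an $f$ kills all the "off-diagonal" contributions: if $N$ is small enough, then for every open bisection $B$ in a fixed finite cover of $\supp(c)$ that does \emph{not} meet $\go$, the set $fBf$ becomes empty, because a bisection through a point $\gamma$ with $s(\gamma), r(\gamma)$ both close to $u_0$ but $\gamma \neq u_0$ would force a nontrivial element of $u_0 G u_0$ in the limit, contradicting $u_0 G u_0 = \{u_0\}$.

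First I would reduce to $c \in C_c(G)^+$ by density, absorbing a small error into $\epsilon$ (replace $\epsilon$ by $\epsilon/3$, say). Write $c = \sum_i c_i$ where each $c_i$ is supported in an open bisection $B_i$, with $B_i \subseteq \go$ for $i$ in some index set $I_0$ (so $\sum_{i \in I_0} c_i = E(c)$) and $B_i \cap \go = \emptyset$ for $i \notin I_0$; here I would use that $\go$ is clopen to split $\supp(c)$. Next, pick $u_0$ with $|E(c)(u_0)| > \|E(c)\| - \epsilon/3$; since the set of units with trivial isotropy is dense and $E(c)$ is continuous, I can choose $u_0$ in addition with $u_0 G u_0 = \{u_0\}$ and still $|E(c)(u_0)| > \|E(c)\| - \epsilon/3$. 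For each $i \notin I_0$, since $B_i$ is an open bisection missing $\go$, the element $u_0$ is not in $s(B_i) \cap r(B_i)$ or, if it is, the corresponding $\gamma_i \in B_i$ has $r(\gamma_i) = s(\gamma_i) = u_0$ forcing $\gamma_i = u_0 \in \go$, a contradiction; so a compactness argument (finitely many $i$) yields an open neighbourhood $N$ of $u_0$ in $\go$ such that $(r \times s)(B_i) \cap (N \times N) = \emptyset$ for all $i \notin I_0$. Then choose $f \in C_c(\go)^+$ with $0 \le f \le 1$, $f(u_0) = 1$, and $\supp(f) \subseteq N$; this gives (\ref{it:*1}).

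For (\ref{it:*2}): $fcf = \sum_i f c_i f$, and for $i \notin I_0$ one computes $(f c_i f)(\gamma) = f(r(\gamma)) c_i(\gamma) f(s(\gamma))$, which vanishes since $c_i$ is supported in $B_i$ and $(r(\gamma), s(\gamma)) \in N \times N$ is impossible there; hence $fcf = \sum_{i \in I_0} f c_i f = f E(c) f$, so in fact the left side of (\ref{it:*2}) is $0$ (before the density reduction, $<\epsilon$ after). For (\ref{it:*3}): $fE(c)f$ is the function $u \mapsto f(u)^2 E(c)(u)$ in $C_0(\go)$, and evaluating at $u_0$ gives $\|fE(c)f\| \ge |f(u_0)^2 E(c)(u_0)| = |E(c)(u_0)| > \|E(c)\| - \epsilon/3 \ge \|E(c)\| - \epsilon$.

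The main obstacle is the middle step: verifying that a single neighbourhood $N$ of $u_0$ can be chosen to simultaneously annihilate \emph{all} the off-diagonal bisections. This needs that $\supp(c)$ meets only finitely many of the $B_i$ (true since $\supp(c)$ is compact and $\{B_i\}$ is an open cover), and a clean argument that for each such $B_i$ with $B_i \cap \go = \emptyset$, the "diagonal" $\{(u,u) : u \in \go\}$ is disjoint from $(r \times s)(\overline{B_i \cap \supp c})$ \emph{near} $u_0$ — which is exactly where $u_0 G u_0 = \{u_0\}$ enters, ruling out a sequence $\gamma_n \in B_i$ with $r(\gamma_n), s(\gamma_n) \to u_0$. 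One should be slightly careful that $B_i$ need not be precompact, but intersecting with the compact $\supp(c)$ fixes this; I would phrase it via: if no such $N$ existed, a diagonal/net argument produces $\gamma \in \overline{B_i} \cap \supp(c)$ with $r(\gamma) = s(\gamma) = u_0$, forcing $\gamma \in u_0 G u_0 = \{u_0\} \subseteq \go$, contradicting $B_i \cap \go = \emptyset$ (after shrinking $B_i$ to a precompact bisection around the relevant point, so that $\overline{B_i} \cap \supp c$ is compact).
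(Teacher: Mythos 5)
Your proof is correct and follows essentially the same route as the paper's: approximate $c$ by a positive element $b$ of $C_c(G)$, use topological principality to produce a small open subset of $\go$ on which a bump function $f$ satisfies $fbf=fE(b)f$ exactly (because the compact "off-diagonal" part of the support of $b$ is annihilated), and then run the norm estimates. The only differences are presentational: you prove the key annihilation step directly by a net/compactness argument at a unit with trivial isotropy, where the paper simply cites \cite[Lemma~2.3]{A-D97} applied to $K=\supp(b-E(b))$, and you obtain item~(3) by evaluating at that unit rather than via the inequality $fE(b)f\geq(1-\delta)f^2$.
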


\begin{proof}
Let $\epsilon >0$. For $c=0$ the result is trivial so let $c \in {\cs_r(G)^+}$ such that $c \neq 0$.  Define
\[
 a:= \dfrac{c}{\|E(c)\|}.
\]
To find an appropriate $f$, we use the construction in the proof of \cite[Proposition~2.4]{A-D97};
we include the details below.
Find $b \in C_c(G)\cap\cs_r(G)^+$ so that $\|a-b\| < \dfrac{\epsilon}{2\|E(c)\|}$.
Then
\[\|E(b)\| > 1 - \dfrac{\epsilon}{2\|E(c)\|}\]
because $E$ is linear and $\|E(a)\|=1$.  Now, let $K:= \supp(b - E(b))$, which is
a compact subset of $G \setminus \go$.   Let
\[
U:= \{u \in \go \mid {E(b)}(u) > 1-\dfrac{\epsilon}{2\|E(c)\|}\}.
\]
Since $G$ is topologically principal,
 \cite[Lemma~2.3]{A-D97} implies
that there exists a nonempty open set $V \subseteq U$ such that $VKV = \emptyset$.  Using regularity,
fix a nonempty open set $W$ such that $\overline{W} \subseteq V$. Using normality, select a positive (nonzero)
real-valued function
$f \in C_c(\go)$
such that $f|_{\overline{W}}=1,~ \supp(f)\subseteq V$, and $0\leq
f(x)\leq 1$ for all $x \in \go$.
 Therefore, $f$ is positive in $\cs_r(G)$  and
satisfies item~(\ref{it:*1}).

To see that item~(\ref{it:*2}) holds,
a direct computation gives
\begin{equation}
\label{eq:b eq}
fbf = fE(b)f.
\end{equation}
Since  $\|a-b\| < \dfrac{\epsilon}{2\|E(c)\|}$, $\|f\|=1$ and $E$ is norm decreasing we have
\begin{equation}
\label{eq:E small}
\|fE(a)f-fE(b)f\|<\dfrac{\epsilon}{2\|E(c)\|}.
\end{equation}
Combining equations~\eqref{eq:b eq} and \eqref{eq:E small}  we get
\[
 \|faf - fE(a)f\| = \|faf -fbf+fbf-fE(b)f+fE(b)f - fE(a)f\|  < \dfrac{\epsilon}{\|E(c)\|}.\]
Thus multiplying by $\|E(c)\|$ gives
$\|fcf - fE(c)f\| < \epsilon$ as needed {in (\ref{it:*2})}.

To see item~(\ref{it:*3}) notice that since $\supp f \subseteq U$ we have
\[
fE(b)f\geq(1-\dfrac{\epsilon}{2\|E(c)\|})f^2.
\]
Since $\|f\|=1$, from the above equation and equation \eqref{eq:E small} we get
\[
\|fE(a)f\|> \|fE(b)f\|-\dfrac{\epsilon}{2\|E(c)\|}\geq 1-\dfrac{\epsilon}{2\|E(c)\|}-\dfrac{\epsilon}{2\|E(c)\|}=1-\dfrac{\epsilon}{\|E(c)\|}.
\]
Multiplying by $\|E(c)\|$ we obtain $\|fE(c)f\| >   \|E(c)\| - \epsilon$ as needed.\end{proof}

\begin{lem}
 \label{lem:!}
Let $G$ be a locally compact, Hausdorff and \'etale groupoid and
$E:\cs_r(G) \to C_0(\go)$ be the faithful conditional expectation extending restriction.  Suppose that
$G$ is topologically principal.  For every nonzero $a \in \cs_r(G)^+$, there exists {nonzero} $h \in C_0(\go)^+$ such that
$h\precsim a$.
\end{lem}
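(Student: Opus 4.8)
The plan is to feed Lemma~\ref{lem: nice func} into the standard perturbation lemma for Cuntz comparison. First observe that since $a\in\cs_r(G)^{+}$ is nonzero and $E$ is faithful, $E(a)=E\bigl((a^{1/2})^{*}a^{1/2}\bigr)$ cannot be $0$ (otherwise $a^{1/2}=0$, hence $a=0$), so $\|E(a)\|>0$. Fix any $\epsilon$ with $0<\epsilon<\tfrac12\|E(a)\|$ and apply Lemma~\ref{lem: nice func} with $c=a$ to produce $f\in C_0(\go)^{+}$ with $\|f\|=1$, $\|faf-fE(a)f\|<\epsilon$, and $\|fE(a)f\|>\|E(a)\|-\epsilon>\epsilon$.

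Next I would take
\[
h:=\bigl(fE(a)f-\epsilon\bigr)_{+}.
\]
Since $f$ and $E(a)$ are positive and commute (both lying in the commutative algebra $C_0(\go)$), the element $fE(a)f=f^{2}E(a)$ is a positive element of $C_0(\go)$, and applying the continuous function $t\mapsto\max(t-\epsilon,0)$ (which vanishes at $0$) keeps us inside $C_0(\go)^{+}$; thus $h\in C_0(\go)^{+}$. Because $\|fE(a)f\|>\epsilon$, the function $fE(a)f$ exceeds $\epsilon$ at some point of $\go$, so $h\neq 0$.

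It then remains to verify $h\precsim a$, which I would do in two steps joined by transitivity of $\precsim$. Since $fE(a)f$ and $faf$ are positive with $\|fE(a)f-faf\|<\epsilon$, the standard perturbation lemma ``$\|x-y\|<\epsilon\Rightarrow (x-\epsilon)_{+}\precsim y$ for positive $x,y$'' (see, e.g., \cite{KR}) gives $h=\bigl(fE(a)f-\epsilon\bigr)_{+}\precsim faf$. And since $f=f^{*}$, we have $faf=f^{*}af$, so the constant sequence $x_{k}=f$ witnesses $faf\precsim a$. Combining these with transitivity of $\precsim$ yields $h\precsim faf\precsim a$, as required.

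All the substantive work is already packaged in Lemma~\ref{lem: nice func}, so I do not expect a serious obstacle here; the only points needing attention are (i) invoking the perturbation lemma in the correct form, and (ii) the elementary bookkeeping with $\epsilon$ — namely choosing $\epsilon<\tfrac12\|E(a)\|$ so that item~(\ref{it:*3}) of the previous lemma forces $\|fE(a)f\|>\epsilon$ and hence $h\neq 0$.
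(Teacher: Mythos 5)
Your proof is correct and follows essentially the same route as the paper: both feed Lemma~\ref{lem: nice func} into the perturbation lemma \cite[Lemma~2.2]{KR-infty} to produce $h=(fE(a)f-\epsilon)_+$ (the paper normalises $c=a/\|E(a)\|$ and fixes $\epsilon=1/4$, cutting down at $1/2$, while you keep $c=a$ and a general $\epsilon<\tfrac12\|E(a)\|$ — a cosmetic difference). The only other divergence is that the paper gets $h=g^*fcfg$ in one step rather than splitting into $h\precsim faf\precsim a$ by transitivity; both are fine.
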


\begin{proof}
 Let $a \in \cs_r(G)^+$ such that $a \neq 0$.  Since $E$ is faithful, $E(a)$ is nonzero.
Applying Lemma~\ref{lem:star} to $c:= \frac{a}{\|E(a)\|}$ and
$\epsilon = 1/4$ gives us an $f \in C_0(\go)$ such that items (\ref{it:*1}), (\ref{it:*2}) and (\ref{it:*3}) of Lemma~\ref{lem:star}
hold. In particular $\|fE(c)f\|>\frac{3}{4}$.

Following {\cite[p. 640]{KR}}, for each $d \in {C_0(\go)^+}$ we define the element
\[(d-1/2)_+ := \phi_{1/2}(d) \in C_0(\go)^+ \]
where $\phi_{1/2} (t) = \operatorname{max} \{t-1/2, 0\}$ for $t \in \RR^+$.
Notice that
\[
\|\phi_{1/2}(d)\| = \max\{\|d\|-1/2,0\},
\]
for each $d\in C_0(G^{(0)})^+$.

Now let  $h:= (fE(c)f - 1/2)_+ \in C_0(\go)^+$.  Using item~(\ref{it:*2}) of Lemma~\ref{lem:star} and
{\cite[Lemma~2.2]{KR-infty}}, we can find $g \in \cs_r(G)$  so that
$h = g^*fcfg$.  Therefore $h \precsim a$. Finally,  $h \neq 0$ since
\[ \|h\| = \|(fE(c)f - 1/2)_+\| \geq \|fE(c)f\| - 1/2 \geq 1/4 >0. \qedhere\] \end{proof}

We are now in a position  to prove the main result of this section.

\begin{theorem}
 \label{thm:general}
Let $G$ be a locally compact, Hausdorff and \'etale groupoid.  Suppose that
$G$ is minimal and topologically principal.  Then   $\cs_r(G)$ is purely infinite
if and only if every nonzero positive element of $C_0(\go)$ is infinite in $\cs_r(G)$.
\end{theorem}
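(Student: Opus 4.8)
The plan is to prove the two implications separately; the forward direction is immediate and all the content lies in the converse. For the forward direction: if $\cs_r(G)$ is purely infinite then by \cite[Theorem~4.16]{KR} every nonzero positive element of $\cs_r(G)$ is properly infinite, hence in particular infinite, and since the positive cone of $C_0(\go)$ sits inside $\cs_r(G)^+$ via the embedding of Lemma~\ref{lem:new}~(\ref{it1:review}), every nonzero positive element of $C_0(\go)$ is infinite in $\cs_r(G)$.

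For the converse I would first note that $\cs_r(G)$ is simple (the reduced-algebra counterpart of \cite[Theorem~5.1]{BCFS}): a nonzero closed ideal $J$ of $\cs_r(G)$ meets $C_0(\go)$ in a nonzero $G$-invariant ideal --- this is where topological principality is used --- which minimality forces to be all of $C_0(\go)$, so $J=\cs_r(G)$ because $C_0(\go)$ contains an approximate unit for $\cs_r(G)$ (Lemma~\ref{lem:new}~(\ref{it5:review})). By \cite[Theorem~4.16]{KR} it then suffices to show every nonzero $a \in \cs_r(G)^+$ is properly infinite. Fix such an $a$. By Lemma~\ref{lem:!} --- the one place topological principality enters directly --- there is a nonzero $h \in C_0(\go)^+$ with $h \precsim a$. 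By hypothesis $h$ is infinite, so choose a nonzero $b \in \cs_r(G)^+$ with $b \precsim h$ and $h \oplus b \precsim h$; since Cuntz comparison is transitive and respects direct sums, iterating $h \oplus b \precsim h$ gives $h \oplus b^{\oplus n} \precsim h$, and hence $b^{\oplus n} \precsim h$, for every $n \ge 1$.

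Next I would bring in the comparison theory for simple $\cs$-algebras. Fix $\epsilon > 0$. Because $\cs_r(G)$ is simple and $a, b$ are nonzero positive, there is an $n$ with $(a-\epsilon)_+ \precsim b^{\oplus n}$ (the standard finite-subequivalence estimate in simple $\cs$-algebras; see, e.g., \cite{KR}); chaining this with the previous step,
\[
(a \oplus a - \epsilon)_+ = (a-\epsilon)_+ \oplus (a-\epsilon)_+ \precsim b^{\oplus 2n} \precsim h \precsim a.
\]
Since this holds for every $\epsilon > 0$ and $(c-\epsilon)_+ \precsim d$ for all $\epsilon > 0$ implies $c \precsim d$ (see \cite{KR}), we conclude $a \oplus a \precsim a$, so $a$ is properly infinite, and therefore $\cs_r(G)$ is purely infinite. (Equivalently one could first upgrade $h$ to a properly infinite element using the same simplicity input and then use $a \in \overline{\cs_r(G)\,h\,\cs_r(G)} = \cs_r(G)$; this uses the same ingredients.)

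The hard part is not any single step but marshalling the right facts and tracking the cut-downs $(\cdot - \epsilon)_+$ correctly: the proof leans on simplicity of $\cs_r(G)$ (both for the finite-subequivalence estimate and so that $b$ generates $\cs_r(G)$ as an ideal), on the two standard Cuntz-comparison facts used above, and --- crucially --- on the observation that the \emph{weaker} hypothesis ``infinite'' rather than ``properly infinite'' already suffices, precisely because the auxiliary element $b$ can be amplified inside $h$ via $h \oplus b^{\oplus n} \precsim h$. It is also worth checking that the hypothesis already forbids characters on $\cs_r(G)$ --- a character would restrict to evaluation at some $u \in \go$, and then no $h \in C_0(\go)^+$ with $h(u) \neq 0$ could be infinite --- which is consistent with the conclusion.
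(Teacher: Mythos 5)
Your proof is correct and follows the same route as the paper: the forward direction is immediate, and the converse reduces via Lemma~\ref{lem:!} to a nonzero $h \in C_0(\go)^+$ with $h \precsim a$, then uses simplicity of $\cs_r(G)$ to upgrade infiniteness of $h$ to proper infiniteness of $a$. The only difference is that where the paper cites \cite[Proposition~3.14]{KR} (infinite implies properly infinite in simple $\cs$-algebras) and \cite[Lemma~3.8]{KR} (proper infiniteness passes up along $\precsim$), and \cite{Renault} for simplicity, you re-derive these by hand via the amplification $h \oplus b^{\oplus n} \precsim h$, the finite-subequivalence estimate $(a-\epsilon)_+ \precsim b^{\oplus n}$, and the ideal-intersection argument; these are correct unpackings of the cited results.
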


\begin{proof} The forward implication is trivial.  To see the reverse, let
 $a \in \cs_r(G)^+$ such that $a \neq 0$.  Using Lemma~\ref{lem:!} we can find a
{nonzero} $h \in C_0(\go)^+$ such that $h \precsim a$.  By assumption, we know $h$ is
infinite.  Since $\cs_r(G)$ is simple by \cite[Proposition II.4.6]{Renault},
$h$ is properly infinite by \cite[Proposition~3.14]{KR}.
Thus $a$ is properly
infinite by \cite[Lemma~3.8]{KR}, hence $\cs_r(G)$ is purely infinite.
\end{proof}

Recall that a Kirchberg algebra is a separable, nuclear, purely infinite simple $C^*$-algebra.
 We combine Theorem~\ref{thm:general} with
results from \cite{A-DR, BCFS, Renault}
 to obtain the following characterization of groupoid Kirchberg algebras.

\begin{cor}
 Let $G$ be a second-countable, locally compact, Hausdorff and \'etale groupoid. Then $\cs(G)$ is a
Kirchberg algebra if and only if $G$ is minimal, topologically principal, measure-wise amenable
and every non-zero positive element of $C_0(\go)$ is infinite in $\cs(G)$.
\end{cor}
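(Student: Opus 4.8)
The plan is to check, in both directions, that the four groupoid conditions correspond exactly to the three defining properties of a Kirchberg algebra (separable, nuclear, purely infinite simple), routing each correspondence through Theorem~\ref{thm:general}, the Brown--Clark--Farthing--Sims simplicity criterion from \cite{BCFS}, and the amenability machinery of \cite{A-DR} and \cite{Renault}. Throughout, one keeps in mind that in every case under consideration the full and reduced algebras coincide, so the distinction between ``infinite in $\cs(G)$'' and ``infinite in $\cs_r(G)$'' never causes trouble.

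For the forward implication, suppose $G$ is minimal, topologically principal, measure-wise amenable, and every nonzero positive element of $C_0(\go)$ is infinite in $\cs(G)$. Measure-wise amenability delivers two things at once: the regular representation is faithful on $\cs(G)$, so $\cs(G) = \cs_r(G)$, and this common algebra is nuclear (\cite{A-DR}; see also \cite{Renault}). Second-countability of $G$ makes $\cs_r(G)$ separable, as already noted following \cite[p.~59]{Renault}. Since $\cs(G) = \cs_r(G)$ and $G$ is topologically principal and minimal, the simplicity theorem of \cite{BCFS} shows $\cs(G)$ is simple. Finally, minimality and topological principality let us invoke Theorem~\ref{thm:general}: as every nonzero positive element of $C_0(\go)$ is infinite in $\cs_r(G)$, the algebra $\cs_r(G)$ is purely infinite. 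Assembling these, $\cs(G) = \cs_r(G)$ is separable, nuclear and purely infinite simple, hence a Kirchberg algebra.

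For the converse, suppose $\cs(G)$ is a Kirchberg algebra. In particular it is simple, so by \cite{BCFS} the groupoid $G$ is minimal, topologically principal and $\cs(G) = \cs_r(G)$. It is also nuclear, so $\cs_r(G)$ is nuclear, and therefore $G$ is measure-wise amenable by \cite{A-DR}. Lastly $\cs(G)$ is purely infinite; since $G$ is minimal and topologically principal we may again apply Theorem~\ref{thm:general}, whose ``only if'' direction forces every nonzero positive element of $C_0(\go)$ to be infinite in $\cs_r(G) = \cs(G)$. This establishes all four conditions.

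The only genuinely deep ingredient is the equivalence, for second-countable locally compact Hausdorff \'etale groupoids, between nuclearity of $\cs_r(G)$ and measure-wise amenability of $G$ --- in particular the implication from nuclearity to amenability, which is where \cite{A-DR} does the real work. Everything else is bookkeeping: $\cs(G) = \cs_r(G)$ holds throughout (either by hypothesis via amenability in the forward direction, or by simplicity via \cite{BCFS} in the converse), and the remaining steps are direct citations of Theorem~\ref{thm:general} and the simplicity criterion.
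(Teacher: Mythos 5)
Your proposal is correct and follows essentially the same route as the paper's proof: both directions route through the simplicity criterion of \cite{BCFS}, the equivalence of nuclearity with measure-wise amenability from \cite{A-DR}, separability from second-countability via \cite{Renault}, and Theorem~\ref{thm:general} for the pure infiniteness. The only difference is cosmetic (you label the directions in the opposite order).
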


\begin{proof}
Suppose $\cs(G)$ is a
Kirchberg algebra.  Then $\cs(G)$ is simple by definition and so $\cs(G)= \cs_r(G)$, $G$ is minimal and
$G$ topologically principal \cite[Theorem~5.1]{BCFS}.  Since $\cs(G)$ is nuclear, $\cs_r(G)$ is
also nuclear hence $G$ is measure-wise amenable by
\cite[Corollary~6.2.14]{A-DR}.
Finally, we apply Theorem~\ref{thm:general} to see that every non-zero positive element of
$C_0(\go)$ is infinite in $\cs(G)$.

Conversely, suppose $G$ is minimal, topologically principal, measure-wise amenable
and that every non-zero positive element of $C_0(\go)$ is infinite in $\cs(G)$.  Then
$\cs_r(G) = \cs(G)$ is nuclear by \cite[Corollary~6.2.14]{A-DR}, simple by \cite[Theorem~5.1]{BCFS}, separable
because $G$ is second countable \cite[Remark (iii) page 59]{Renault}
 and purely infinite
by Theorem~\ref{thm:general}.
\end{proof}

\section{$\cs$-algebras associated to ample groupoids}

In this section, we will restrict our attention to ample groupoids.
Although this might seem a very restrictive class of groupoids,
it actually includes a lot of important examples. Again, every Kirchberg algebra  in UCT
is Morita equivalent to a $\cs$-algebra associated to a Hausdorff, ample groupoid
(see \cite{spielberg2007}).
The ample case is far more manageable than the general case.
In particular there is a large number of projections in the associated algebra.
Let $G$ be a locally compact, Hausdorff and \'etale groupoid. If $G$ is ample, then the complex \emph{Steinberg algebra} associated to $G$ is
\[
A(G):=\lsp\{\chi_{_{B}}: B\text{~is a compact open bisection}\}\subseteq C_c(G)
\] where $\chi_{_B}$ denotes the characteristic function of $B$,
is dense in $C^*_r(G)$ see \cite[Proposition~4.2]{CFST} (see also \cite{steinberg}).
A quick computation shows that $\chi_{_{B}}*\chi_{_D}=\chi_{_{BD}}$ and $\chi_{_{B}}^*=\chi_{_{B\inv}}$,
so that if $B\subseteq \go$ is compact open, then $\chi_B$  is a projection.

\begin{theorem}
 \label{thm:totdis}
Let $G$ be a second countable, Hausdorff, ample groupoid.  Suppose that
$G$ is topologically principal, minimal and that $\Bb$ is a basis of $\go$ consisting
of compact open sets.
Then   $\cs_r(G)$ is purely infinite
if and only if every nonzero projection $p$ in $C_0(\go)$ with $\supp(p)  \in \Bb$ is infinite in $\cs_r(G)$.
\end{theorem}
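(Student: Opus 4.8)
The plan is to read this off from Theorem~\ref{thm:general}. A Hausdorff ample groupoid is, in particular, locally compact, Hausdorff and \'etale, so with the standing hypotheses (minimal and topologically principal) Theorem~\ref{thm:general} applies and tells us that $\cs_r(G)$ is purely infinite if and only if every nonzero positive element of $C_0(\go)$ is infinite in $\cs_r(G)$. Since every nonzero projection $p$ in $C_0(\go)$ with $\supp(p)\in\Bb$ is itself a nonzero positive element of $C_0(\go)$, the only thing left to establish is the implication: if every such projection is infinite in $\cs_r(G)$, then \emph{every} nonzero positive element of $C_0(\go)$ is infinite in $\cs_r(G)$. Feeding that claim back into Theorem~\ref{thm:general} then yields both directions of the stated equivalence.

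So assume every nonzero projection in $C_0(\go)$ with support in $\Bb$ is infinite, and let $a\in C_0(\go)^+$ with $a\neq 0$. Put $\delta:=\|a\|/2>0$. The set $W:=\{u\in\go:a(u)>\delta\}$ is a nonempty open subset of $\go$, so since $\Bb$ is a basis of compact open sets for $\go$ we may pick a nonempty $B\in\Bb$ with $B\subseteq W$. Being compact in the Hausdorff space $\go$, the set $B$ is closed, so $\chi_B$ is a nonzero projection in $C_0(\go)$ with $\supp(\chi_B)=B\in\Bb$. Working inside the commutative $\cs$-algebra $C_0(\go)$ we have $0\le\delta\chi_B\le a$ pointwise, hence $\delta\chi_B\precsim a$ (using that $x\precsim y$ whenever $0\le x\le y$), and since multiplying a positive element by a positive scalar does not change its Cuntz subequivalence class, $\chi_B\precsim a$ as well; this Cuntz subequivalence, holding in the subalgebra $C_0(\go)$, a fortiori holds in $\cs_r(G)$. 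Now $\cs_r(G)$ is simple by \cite[Proposition~II.4.6]{Renault}, since $G$ is minimal and topologically principal (the same simplicity used in the proof of Theorem~\ref{thm:general}); hence the infinite projection $\chi_B$ is in fact properly infinite, by \cite[Proposition~3.14]{KR}. As $a\neq 0$, simplicity also gives $a\in\cs_r(G)=\overline{\cs_r(G)\,\chi_B\,\cs_r(G)}$, so $\chi_B$ being properly infinite with $\chi_B\precsim a$, \cite[Lemma~3.8]{KR} shows that $a$ is properly infinite, and therefore (since $a\neq 0$) infinite. As $a$ was an arbitrary nonzero element of $C_0(\go)^+$, this proves the claim, and Theorem~\ref{thm:general} finishes the proof.

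The only step with any real content is the extraction of a basis-projection $\chi_B$ Cuntz-dominated by $a$, and here that is straightforward precisely because $a$ already sits in the commutative subalgebra $C_0(\go)$: ampleness hands us a compact open subset of $\go$ on which $a$ is bounded away from zero, so none of the approximation machinery of Lemmas~\ref{lem:star} and \ref{lem:!} is needed. I therefore do not anticipate a genuine obstacle; the care required is merely in correctly chaining together the simplicity of $\cs_r(G)$ with the two Kirchberg--R{\o}rdam facts used above -- that in a simple $\cs$-algebra an infinite projection is properly infinite, and that proper infiniteness passes up along Cuntz subequivalence within an ideal.
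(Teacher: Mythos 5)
Your proposal is correct and follows essentially the same route as the paper: reduce to Theorem~\ref{thm:general}, use ampleness to produce a basis projection $\chi_B$ with $\chi_B\le sa$ (hence $\chi_B\precsim a$), then invoke simplicity together with \cite[Proposition~3.14]{KR} and \cite[Lemma~3.8]{KR} to conclude that $a$ is properly infinite. The only difference is cosmetic: you make the choice of the compact open set and scalar explicit via the level set $\{u: a(u)>\|a\|/2\}$, where the paper simply asserts their existence.
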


\begin{proof}
The forward implication is trivial.
To see the reverse, suppose every nonzero projection $p$ of $C_0(\go)$ with $\supp(p) = U$ for some
$U \in \Bb$ is infinite in $\cs_r(G)$.  By Theorem~\ref{thm:general} it suffices to show that every positive element in
$C_0(\go)^+$ is infinite. Let $a \in \cs_r(\go)^+$ be {a nonzero element}.  We show that $a$ is
properly infinite.  We claim there is a nonzero projection $p \in {C_0(\go)^+}$ with $\supp(p) \subseteq U$ for some
$U \in \Bb$ such that
$p \precsim a$.  To see this, first note that characteristic functions of
the form $\chi_V$ are projections in $C_0(\go)$ for every compact open set $ V \subseteq\go$.
Since $\Bb$ is a basis of compact open sets, there exists a compact open set $U_0 \in \Bb$ and a nonzero $s \in \RR^+$ such that
$\chi_{_{U_0}}(x) \leq sa(x)$ for every $x \in \go$.
Then $p:=\chi_{_{U_0}} \leq sa$. Applying \cite[Proposition~2.7]{KR} we get that $p \precsim sa$ and so $p \precsim a$ as claimed.
 Since $p$ is infinite by assumption and $\cs_r(G)$ is simple,
$p$ is properly
infinite by \cite[Proposition~3.14]{KR}.  Hence $a$ is properly infinite by \cite[Lemma~3.8]{KR}.
\end{proof}

In the next corollary, we show how we can use the minimality of $G$ to strengthen our result.

\begin{cor}
\label{cor:tot_dis_nhd_base}
Let $G$ be a second countable,  Hausdorff, ample groupoid.  Suppose that
$G$ is topologically principal and minimal.
Then   $\cs_r(G)$ is purely infinite
if and only if there exists a point $x \in \go$ and a neighbourhood basis $\Dd$ at $x$ consisting of compact open sets so that
every nonzero projection $q$ in $C_0(\go)$ with $\supp(q)  \in \Dd$ is infinite in $\cs_r(G)$.
\end{cor}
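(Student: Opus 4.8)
The forward implication should require essentially no work. Since $G$ is minimal and topologically principal, $\cs_r(G)$ is simple (by \cite[Proposition II.4.6]{Renault}), so if $\cs_r(G)$ is purely infinite it is purely infinite simple; then every nonzero positive element of $\cs_r(G)$ is properly infinite by \cite[Theorem~4.16]{KR}, and in particular every nonzero projection of $\cs_r(G)$ is infinite. Because $G$ is ample, $\go$ is a locally compact, Hausdorff, second countable, totally disconnected space, so I may take \emph{any} point $x\in\go$ together with \emph{any} neighbourhood basis $\Dd$ at $x$ consisting of compact open sets; each $D\in\Dd$ is nonempty, so $\chi_D$ is a nonzero projection in $C_0(\go)\subseteq\cs_r(G)$, hence infinite, which is exactly the right-hand side for this choice of $x$ and $\Dd$.

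For the converse, the plan is to reduce to Theorem~\ref{thm:totdis}. First I would fix \emph{some} basis $\Bb$ of $\go$ consisting of compact open sets (one exists since $\go$ is locally compact, Hausdorff and totally disconnected); by Theorem~\ref{thm:totdis} it then suffices to prove that $\chi_U$ is infinite in $\cs_r(G)$ for every nonempty $U\in\Bb$. The heart of the matter is a transport step, which moves the hypothesis---about projections supported near the single point $x$---to projections supported anywhere in $\go$. Given a nonempty $U\in\Bb$, minimality of $G$ yields $\gamma\in G$ with $s(\gamma)=x$ and $r(\gamma)\in U$. Since $G$ is ample I pick a compact open bisection $B\ni\gamma$, and, after replacing $B$ by $B\cap r\inv(O)$ for a compact open $O$ with $r(\gamma)\in O\subseteq r(B)\cap U$, I may assume $r(B)\subseteq U$. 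Then $s(B)$ is a compact open neighbourhood of $x$, so some $D\in\Dd$ satisfies $D\subseteq s(B)$; put $B_D:=B\cap s\inv(D)$, a compact open bisection with $s(B_D)=D$ and $r(B_D)=:D'$ a nonempty compact open subset of $U$. In the Steinberg algebra $A(G)\subseteq\cs_r(G)$ one computes $\chi_{B_D}^{*}\chi_{B_D}=\chi_D$ and $\chi_{B_D}\chi_{B_D}^{*}=\chi_{D'}$, so $\chi_D$ and $\chi_{D'}$ are Murray--von Neumann equivalent; since $\chi_D$ is infinite by hypothesis, so is $\chi_{D'}$.

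Finally I would push infiniteness from $\chi_{D'}$ up to $\chi_U$. From $D'\subseteq U$ we get $0\le\chi_{D'}\le\chi_U$, hence $\chi_{D'}\precsim\chi_U$ by \cite[Proposition~2.7]{KR}; simplicity of $\cs_r(G)$ turns the infinite projection $\chi_{D'}$ into a properly infinite one by \cite[Proposition~3.14]{KR}, and then $\chi_U$ is properly infinite---in particular infinite---by \cite[Lemma~3.8]{KR}. Applying Theorem~\ref{thm:totdis} finishes the argument. I do not expect a genuine obstacle here: the only care needed is the topological bookkeeping in the transport step, namely checking that $B$, $B_D$, $D'$ are really compact and open and that $B$ can be shrunk to have range inside $U$ while still containing $\gamma$, all of which is routine once one uses that $G$ is ample and that $r$ and $s$ are open maps restricting to homeomorphisms on bisections. (One could equally well bypass $\Bb$ and go through Theorem~\ref{thm:general} instead: for a nonzero $a\in C_0(\go)^{+}$, choosing $D'$ inside $\{u\in\go: a(u)>\frac{1}{2}\|a\|\}$ gives $\chi_{D'}\le\frac{2}{\|a\|}a$, so $\chi_{D'}\precsim a$ and $a$ is properly infinite.)
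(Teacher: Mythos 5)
Your proposal is correct and follows essentially the same route as the paper: use minimality and ampleness to produce a compact open bisection $B$ carrying a basic compact open neighbourhood of $x$ (a member of $\Dd$) into the given set $U$, transport infiniteness along $\chi_B$, and then promote it to $\chi_U$ via simplicity and \cite[Lemma~3.8]{KR} before invoking Theorem~\ref{thm:totdis}. The only differences are cosmetic (you phrase the transport as a Murray--von Neumann equivalence $\chi_{B_D}^*\chi_{B_D}=\chi_D$, $\chi_{B_D}\chi_{B_D}^*=\chi_{D'}$, whereas the paper writes $\chi_V=\chi_B^*\chi_{r(B)}\chi_B$ and finishes with $\chi_U=\chi_{r(B)}+\chi_{U-r(B)}$).
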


\begin{proof}
Again, the forward direction is trivial.  For the reverse implication, suppose there exist a point $x \in \go$ and neighbourhood
basis $\Dd$ of $x$ {consisting of compact open sets} such that that every nonzero projection $q$  in $C_0(\go)$ with $\supp(q) \in \Dd$
is infinite in $\cs_r(G)$.  Let $\Bb$ be a basis of $\go$ consisting of compact open sets and suppose $p:=\chi_{_{U}}$
is a nonzero projection in $C_0(\go)$ with $ U  \in \Bb$.    By Theorem~\ref{thm:totdis}, it suffices to show that
$p$ is infinite.  Since
$G$ is minimal {and ample}, there exists a compact open bisection $B$ such that
$x \in s(B)$ and $r(B) \cap U \neq \emptyset$.  By shrinking $B$, we may assume that $r(B) \subseteq U$.
Since $s(B)$ is an {compact} open neighbourhood of $x$, there exists a $V \in \Dd$ such that $V \subseteq s(B)$.  By shrinking $B$
again, we may assume that $s(B) = V$. Thus,
\[\chi_{_V }= \chi_B^* \chi_{_{r(B)}} \chi_{_B}.\]
That is, $\chi_{_V}  \precsim \chi_{_r(B)}$.  Hence,  $\chi_{_r(B)}$ is properly infinite by \cite[Lemma~3.8]{KR}.
Finally, since $\chi_{_U} = \chi_{_r(B)} + \chi_{_{U - r(B)}}$, $\chi_{_U}$ is infinite.
\end{proof}

\section{An application:  $k$-graph $\cs$-algebras}
\label{Sec:kgraph}

In this section, we apply Theorem~\ref{thm:totdis} to  $\cs$-algebras associated to $k$-graphs. We assume the reader is familiar with the basic definitions and constructions of $k$-graphs and their $C^*$-algebras found in \cite{KP}, but we recall a few facts here. Let $\Lambda$ be a $k$-graph.  Then the associated $\cs$-algebra $\cs(\Lambda)$ is the universal $\cs$-algebra generated by a Cuntz-Krieger $\Lambda$-family $\{s_{\lambda}^{}: \lambda \in \Lambda\}$. To keep things clean, we will restrict our attention to row-finite $k$-graphs with no sources but similar
results hold in the more general setting. We think our results will be useful in this setting because
necessary and sufficient conditions on $\Lambda$ for $\cs(\Lambda)$ to be purely infinite simple are not known.

Following \cite{KP} we recall how $\cs(\Lambda)$ can be realised  as the $\cs$-algebra of a second countable, Hausdorff, ample  groupoid
$G_{\Lambda}$ as follows.  Let $\Lambda^{\infty}$ denote the infinite path space of $\Lambda$ and $\Lambda^\infty(v)$ be the set of infinite paths with range $v$. Define
\[
 G_{\Lambda}:= \{(x,n,y) \in \Lambda^{\infty} \times \NN^k \times \Lambda^{\infty} : \sigma^l(x)=\sigma^m(y), n=l-m\}
\]
where $\sigma$ is the shift map.  We view $(x,n,y)$ as a morphism with source $y$ and range $x$.
Composition is given by $(x,n,y)(y,m,w)= (x,n+m,w)$.
The unit space $G_{\Lambda}^{(0)}$ is identified with $\Lambda^{\infty}$.
For $\lambda, \mu \in \Lambda$ with $s(\lambda)=s(\mu)$ we define
\[
 Z(\lambda, \mu):= \{(\lambda z, d(\lambda)-d(\mu),\mu z) : z \in \Lambda^{\infty}(s(\lambda))\}.
\]
The (countable) collection of all such  $Z(\lambda, \mu)$
generate a topology under which $G_{\Lambda}$ is a second countable, Hausdorff, ample groupoid by \cite[Proposition~2.8]{KP}.
Further, the relative topology on the unit space $\Lambda^{\infty}$ has a basis of compact cylinder sets
\[
 Z(\lambda) := \{\lambda x \in \Lambda^{\infty} : x \in \Lambda^{\infty}(s(\lambda))\}
\] by identifying $Z(\lambda,\lambda)$ and $Z(\lambda)$ from \cite[Lemma~2.6 and Proposition~2.8]{KP}.
Note that $G_{\Lambda}$ is amenable by \cite[Theorem~5.5]{KP} and hence $\cs_r(G_{\Lambda})= \cs(G_{\Lambda})$.
{It was shown in \cite{KP} that $$\cs(\Lambda) \cong \cs(G_{\Lambda}).$$
More specifically, by \cite[Corollary~3.5(i)]{KP}, there is a (unique)} isomorphism $\phi: \cs(\Lambda) \to \cs(G_{\Lambda})$
such that $\phi(s_{\lambda}^{}) = \chi_{_{Z(\lambda, s(\lambda))}}$.
Note that
\begin{equation*}
\phi(s_{\mu}^{}s_{\mu}^*)= \chi_{_{Z(\mu, s(\mu))}}\chi^*_{_{Z(\mu, s(\mu))}}= \chi_{_{Z(\mu, s(\mu))}}\chi_{_{Z(s(\mu), \mu)}}
= \chi_{_{Z(\mu, \mu)}}=\chi_{_{Z(\mu)}}.
\end{equation*}
With all of this theory in place, along with the simplicity results of \cite{RS07} and \cite{BCFS},
the following is an immediate corollary of Theorem~\ref{thm:totdis} and Corollary~\ref{cor:tot_dis_nhd_base}.

\begin{cor}
\label{cor:kgraph}
Suppose $\Lambda$ is a row-finite $k$-graph with no sources such that $\Lambda$ is
aperiodic and cofinal {in the sense of \cite{RS07}}.  Then
\begin{enumerate}
\item \label{it1:graph} For $\mu\in \Lambda$, $s_{\mu}^{}s_{\mu}^*$ is infinite if and only if $s_{s(\mu)}$ is.
\item \label{it2:graph} $\cs(\Lambda)$ is purely infinite simple if and only
if {$s_v$ is infinite for every $v \in \Lambda^0$}.
\item \label{it3:graph} $\cs(\Lambda)$ is purely infinite simple if and only
if there exists $x \in \Lambda^{\infty}$ such that $s_v$ is infinite for every vertex $v$ on $x$.
\end{enumerate}
\end{cor}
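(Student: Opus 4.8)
The plan is to transfer everything to the groupoid $G_\Lambda$ through the isomorphism $\phi\colon\cs(\Lambda)\to\cs(G_\Lambda)$ and then invoke Theorem~\ref{thm:totdis} and Corollary~\ref{cor:tot_dis_nhd_base}. First I would record the facts already assembled in this section: $G_\Lambda$ is a second countable, Hausdorff, ample and amenable groupoid (so $\cs_r(G_\Lambda)=\cs(G_\Lambda)\cong\cs(\Lambda)$); aperiodicity of $\Lambda$ is equivalent to $G_\Lambda$ being topologically principal and cofinality of $\Lambda$ is equivalent to $G_\Lambda$ being minimal (the simplicity results of \cite{RS07} and \cite{BCFS}), so in particular $\cs(\Lambda)$ is simple and ``purely infinite'' and ``purely infinite simple'' mean the same thing for it; and $\phi(s_\mu s_\mu^*)=\chi_{_{Z(\mu)}}$ for every $\mu\in\Lambda$, whence $\phi(s_v)=\chi_{_{Z(v)}}$ when $v$ is a vertex (a path of degree $0$, so $Z(v)=Z(v,v)$). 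Thus $G_\Lambda$ satisfies the hypotheses of both Theorem~\ref{thm:totdis} and Corollary~\ref{cor:tot_dis_nhd_base}.

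For item~(\ref{it1:graph}) I would simply observe that $s_\mu$ is a partial isometry in $\cs(\Lambda)$ with $s_\mu^*s_\mu=s_{s(\mu)}$, so $s_\mu s_\mu^*$ and $s_{s(\mu)}$ are Murray-von Neumann equivalent (equivalently, in $\cs(G_\Lambda)$ the compact open bisection $B=Z(\mu,s(\mu))$ satisfies $\chi_{_B}^*\chi_{_B}=\chi_{_{Z(s(\mu))}}$ and $\chi_{_B}\chi_{_B}^*=\chi_{_{Z(\mu)}}$); since infiniteness of a projection is a Murray-von Neumann invariant, $s_\mu s_\mu^*$ is infinite if and only if $s_{s(\mu)}$ is.

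For item~(\ref{it2:graph}) I would apply Theorem~\ref{thm:totdis} with $\Bb=\{Z(\lambda):\lambda\in\Lambda\}$, the basis of $G_\Lambda^{(0)}=\Lambda^\infty$ consisting of compact open cylinder sets: the nonzero projections of $C_0(G_\Lambda^{(0)})$ with support in $\Bb$ are exactly the $\chi_{_{Z(\lambda)}}=\phi(s_\lambda s_\lambda^*)$, so $\cs(\Lambda)$ is purely infinite if and only if $s_\lambda s_\lambda^*$ is infinite for every $\lambda\in\Lambda$, which by item~(\ref{it1:graph}) says $s_{s(\lambda)}$ is infinite for every $\lambda$, i.e.\ (since $\{s(\lambda):\lambda\in\Lambda\}=\Lambda^0$) that $s_v$ is infinite for every $v\in\Lambda^0$. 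For item~(\ref{it3:graph}), for the reverse implication I would, given $x\in\Lambda^\infty$ with $s_v$ infinite for every vertex $v$ on $x$, take $\Dd=\{Z(x(0,n)):n\in\NN^k\}$ (where $x(0,n)$ denotes the initial segment of $x$ of degree $n$), a neighbourhood basis of $x$ in $\Lambda^\infty$ consisting of compact open sets whose associated nonzero projections $\chi_{_{Z(x(0,n))}}=\phi(s_{x(0,n)}s_{x(0,n)}^*)$ are all infinite by item~(\ref{it1:graph}) because $s(x(0,n))$ is a vertex on $x$; then Corollary~\ref{cor:tot_dis_nhd_base} applies. For the forward implication I would just use item~(\ref{it2:graph}) to get that every $s_v$ is infinite and pick any $x\in\Lambda^\infty$ (such $x$ exists since $\Lambda$ has no sources).

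I do not anticipate a real obstacle: the mathematical content is carried entirely by Theorem~\ref{thm:totdis} and Corollary~\ref{cor:tot_dis_nhd_base}, and what remains is bookkeeping. The points to keep straight are the dictionary between ``aperiodic, cofinal'' and ``topologically principal, minimal'', the identification of $\chi_{_{Z(\lambda)}}$ with $\phi(s_\lambda s_\lambda^*)$, the use of item~(\ref{it1:graph}) to pass from projections indexed by paths to vertex projections, and the fact that the vertices lying on an infinite path $x$ are exactly $\{s(x(0,n)):n\in\NN^k\}$.
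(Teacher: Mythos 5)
Your proposal is correct and follows essentially the same route as the paper: item (1) via Murray--von Neumann equivalence of $s_\mu s_\mu^*$ and $s_\mu^* s_\mu = s_{s(\mu)}$, item (2) by applying Theorem~\ref{thm:totdis} to the basis of cylinder sets $Z(\lambda)$ after deducing topological principality and minimality of $G_\Lambda$ from the simplicity results of \cite{RS07} and \cite{BCFS}, and item (3) by applying Corollary~\ref{cor:tot_dis_nhd_base} to the neighbourhood basis $\{Z(x(0,n))\}$ at $x$. Your explicit treatment of the forward direction of (3) (any $x\in\Lambda^\infty$ works, and one exists since $\Lambda$ has no sources) is a small detail the paper leaves implicit.
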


\begin{proof}
For (\ref{it1:graph}), we use a trick used in \cite[Lemma~8.13]{Sims}. Recall that
infiniteness is preserved under von Neumann equivalence, hence $s_{\mu}^{}s_{\mu}^*$ is infinite if and
only if $s_{\mu}^*s_{\mu}^{}=s_{s(\mu)}^{}$ is infinite.

For (\ref{it2:graph}),  we apply Theorem~\ref{thm:totdis}  to the second countable, Hausdorff, ample groupoid  $G_{\Lambda}$; first we check the
remaining hypotheses of Theorem~\ref{thm:totdis}. Since $\Lambda$ is cofinal and aperiodic, $\cs(G_\Lambda)\cong\cs(\Lambda)$  is simple by \cite[Theorem~3.1]{RS07}.
Thus $\cs(G_{\Lambda})= \cs_r(G_{\Lambda})$ is simple and hence $G_{\Lambda}$ is topologically
principal and minimal by \cite[Theorem~5.1]{BCFS}.    

We have that the collection of cylinder
sets of the form $Z(\mu)$ form a basis $\Bb$ of $G_{\Lambda}^{(0)}$ consisting of compact open sets.
Now we apply Theorem~\ref{thm:totdis} to see that $\cs(G_{\Lambda})$ is purely infinite  if and only
if each $\chi_{_{Z(\mu)}}$ is infinite. Let $\phi: C^*(\Lambda)\to C^*(G_\Lambda)$ be the isomorphism characterized by $s_\mu\mapsto \chi_{_{Z(\mu)}}$. Since $\phi$ is an isomorphism, this gives
$\chi_{_{Z(\mu)}}$ is infinite if and only if
$\phi\inv(\chi_{_{Z(\mu)}})=s_{\mu}^{}s_{\mu}^*$ is infinite.  Finally,   $s_{\mu}^{}s_{\mu}^*$ if and only if $s_{s(\mu)}$ is infinite by (1).

For (\ref{it3:graph}), given an infinite path $x$, the collection of compact open sets of the form $Z(x(0,n))$ for $n \in \NN^k$
form a neighbourhood base at $x$.  Now proceed as in the proof of (2) replacing Theorem~\ref{thm:totdis} with Corollary~\ref{cor:tot_dis_nhd_base} and $\mu$ with $x(0,n)$.
\end{proof}


\section{The non-simple case}
\label{sec:NS}

Let $A$ be a $C^*$-algebra.  A pair of positive elements $(a_1,a_2)\in A\times A$ has the
{\em matrix diagonalization property in $A$} in the sense of \cite[Definition 3.3.]{KS} if
for every positive matrix $\minimatrix{a_1}{b_{12}}{b_{21}}{a_2}$ with $b_{ij}\in A$ and every $\epsilon_1,
\epsilon_2, \delta>0$ there exists $d_1, d_2\in A$ with
\[
\|d_i^* a_i d_i-a_i\|<\epsilon_i \text{ and }  \quad \|d_i^*b_{ij}d_j\|<\delta.
\]
A subset $\Ff$ of $A^+$ is a {\em filling family} for $A$, in the sense of \cite[Definition 3.10]{KS},
 if for every hereditary $\cs$-subalgebra $H$ of $A$ and every primitive ideal $I$ of $A$ with $H \not\subseteq I$
 there exist $f \in \Ff$ and $z \in A$ with $z^*z \in H$ and $zz^* = f \not\in I$.

By Proposition~3.13 and Lemma~3.12 of \cite{KS}, if $A^+$ contains a filling family $\Ff$ that
is closed under  $\epsilon$-cut-downs and every pair of elements $(a_1,a_2)\in \Ff\times \Ff$ has the matrix
diagonalization property,
then $A$ is strongly purely infinite.  In this section we provide a characterization of when the reduced groupoid $C^*$-algebra
is strongly purely infinite (Proposition~\ref{cor.1.2}).
In our proof of Proposition~\ref{cor.1.2} we will use results from \cite{BCFS} to
describe ideals of reduced groupoid $C^*$-algebras.   First we need the following lemma.
Recall that a subset $D \subseteq \go$ is said to be {\em invariant} if
$G\cdot D:=\{r(\gamma): s(\gamma)\in D\}\subseteq D$.

\begin{lem}
\label{lem.1.1}
Let $G$ be a second countable, locally compact, Hausdorff and \'etale groupoid such that
 $C^*(G) = C^*_r(G)$. Then the following properties are equivalent:
\begin{enumerate}
\item \label{it1:lem1.1} For every closed invariant set $D \subseteq \go$
$$C^*(G|_{D})= C^*_r(G|_{D}).$$
\item \label{it2:lem1.1} For every closed invariant set $D \subseteq \go$ the sequence
$$\xymatrix{0 \ar[r] & C^*_r(G|_{\go-D}) \ar[r]^-{\iota_r} & C^*_r(G) \ar[r]^-{\rho_r}& C^*_r(G|_{D}) \ar[r]& 0}$$
is exact where $\iota_r$ and $\rho_r$ are determined on continuous functions by extension and restriction
respectively.
\end{enumerate}
\end{lem}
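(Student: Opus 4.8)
The plan is to prove the equivalence by playing the two short exact sequences of reduced and full $C^*$-algebras against each other, using the diagram from Lemma~\ref{lem:new}\eqref{it4:review} together with the fact that the full groupoid $C^*$-algebra construction is always exact on ideals coming from open invariant subsets. First I would recall the standard fact (see \cite{Renault}) that for a locally compact Hausdorff \'etale groupoid $G$ and a closed invariant set $D \subseteq \go$ with complement $U = \go - D$, the restriction map $\rho\colon C^*(G) \to C^*(G|_D)$ and the extension map $\iota\colon C^*(G|_U) \to C^*(G)$ fit into a short exact sequence
\[
\xymatrix{0 \ar[r] & C^*(G|_{U}) \ar[r]^-{\iota} & C^*(G) \ar[r]^-{\rho}& C^*(G|_{D}) \ar[r]& 0}
\]
at the level of full $C^*$-algebras; this is where the restriction $G|_D$ being a closed subgroupoid and $G|_U$ an open subgroupoid is used. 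The reduced version (item \eqref{it2:lem1.1}) is the issue precisely because $\iota_r$ need not be injective and the sequence need not be exact in the middle unless the algebras on the ends behave well.

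For the implication \eqref{it1:lem1.1}$\Rightarrow$\eqref{it2:lem1.1}, I would fix a closed invariant $D$, set $U = \go - D$, and consider the commuting diagram of Lemma~\ref{lem:new}\eqref{it4:review}. By hypothesis $C^*(G) = C^*_r(G)$, and applying \eqref{it1:lem1.1} to the closed invariant set $D$ gives $C^*(G|_D) = C^*_r(G|_D)$; applying it to $\overline{U}$ (or rather using that $U$ is open invariant, so $G|_U$ is itself a groupoid of the same type and $\emptyset$ is a closed invariant subset of $U$, giving $C^*(G|_U) = C^*_r(G|_U)$) gives $C^*_r(G|_U) = C^*(G|_U)$. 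Now the full-algebra sequence above is exact, and under these identifications it becomes exactly the reduced sequence of \eqref{it2:lem1.1}: surjectivity of $\rho_r$ and the inclusion $\operatorname{image}(\iota_r) \subseteq \ker \rho_r$ are already noted in Lemma~\ref{lem:new}\eqref{it4:review}, injectivity of $\iota_r = \iota$ comes from exactness of the full sequence, and the reverse inclusion $\ker \rho_r \subseteq \operatorname{image}(\iota_r)$ likewise transfers from the full sequence. The one point needing care is checking that the isomorphism $C^*(G|_U) \cong C^*_r(G|_U)$ provided by the hypothesis is compatible with the map $\iota_r$ — i.e.\ that the identifications are the canonical ones on $C_c$ and hence agree — which follows because all the maps involved are determined by extension/restriction on compactly supported continuous functions.

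For the converse \eqref{it2:lem1.1}$\Rightarrow$\eqref{it1:lem1.1}, I would again fix a closed invariant $D$ with $U = \go - D$ and aim to show the canonical surjection $C^*(G|_D) \to C^*_r(G|_D)$ is injective. Here I would use the naturality of the full-algebra sequence: we have the full short exact sequence with $C^*(G|_U)$, $C^*(G)$, $C^*(G|_D)$, and a vertical quotient map to the reduced sequence with $C^*_r(G|_U)$, $C^*_r(G)$, $C^*_r(G|_D)$, the middle vertical map being an isomorphism by the standing hypothesis $C^*(G) = C^*_r(G)$. Assuming \eqref{it2:lem1.1}, the bottom row is exact. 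A diagram chase (the five lemma applied with the middle map an isomorphism, the left and right maps surjective) shows the left and right vertical maps are isomorphisms; in particular $C^*(G|_D) = C^*_r(G|_D)$, which is what \eqref{it1:lem1.1} asserts. The main obstacle I anticipate is the bookkeeping to make the five-lemma chase actually go through: one needs the left vertical map $C^*(G|_U) \to C^*_r(G|_U)$ to be surjective (it is, being a quotient map) and the right one $C^*(G|_D) \to C^*_r(G|_D)$ to be surjective (likewise), and one must verify the top row's exactness at $C^*(G|_U)$ — i.e.\ injectivity of the full $\iota$ — which is the classical ideal-structure fact for full groupoid $C^*$-algebras and should be cited rather than reproved. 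Once these inputs are in place the equivalence follows formally.
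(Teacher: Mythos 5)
Your overall strategy coincides with the paper's: both proofs place the full short exact sequence (exact by the classical ideal-structure result, cited in the paper as \cite[Lemma~2.10]{MRW96}) above the reduced sequence, with the canonical quotient maps $\pi_U$, $\pi$, $\pi_D$ as vertical arrows, and then chase the diagram. Your argument for (2)$\Rightarrow$(1) is exactly the paper's chase: lift an element of $\ker\pi_D$ through the surjection $\rho$, push its image under $\pi$ into $\ker\rho_r=\operatorname{image}(\iota_r)$ using exactness of the bottom row, lift through the surjection $\pi_U$, and use injectivity of $\pi$ to conclude the element is $0$.

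However, there is a genuine error in your (1)$\Rightarrow$(2) direction. You claim that hypothesis (1) yields $C^*(G|_U)=C^*_r(G|_U)$ for the open invariant set $U=\go-D$, justifying this either by applying (1) to $\overline{U}$ or by viewing $\emptyset$ as a closed invariant subset of $U$. Neither works: applying (1) to the closed invariant set $\overline{U}$ gives information about $G|_{\overline{U}}$, not about $G|_U$; and applying condition (1) with $G|_U$ as the ambient groupoid presupposes $C^*(G|_U)=C^*_r(G|_U)$ as the standing hypothesis, which is circular (taking the closed invariant subset $\emptyset$ of $U$ only gives the vacuous statement $0=0$). Condition (1) simply says nothing directly about open invariant sets. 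Consequently your plan of identifying the reduced sequence with the full one via three vertical isomorphisms does not get off the ground: you only know that $\pi$ and $\pi_D$ are isomorphisms. The repair is the chase the paper actually performs, which needs only those two facts: surjectivity of $\rho_r$ follows from $\rho_r\circ\pi=\pi_D\circ\rho$; for $\ker\rho_r\subseteq\operatorname{image}(\iota_r)$, write $y=\pi(b)\in\ker\rho_r$, deduce $\rho(b)=0$ from injectivity of $\pi_D$, write $b=\iota(c)$ by exactness of the top row, and conclude $y=\iota_r(\pi_U(c))$; and injectivity of $\iota_r$ follows because $\iota_r\circ\pi_U=\pi\circ\iota$ is injective while $\pi_U$ is surjective. (This chase shows a posteriori that $\pi_U$ is an isomorphism, but that is a conclusion of the lemma, not an available hypothesis.)
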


\begin{rmk}
             In \cite[Remark~4.10]{Renault91}, Renault mentions that if $G|_{D}$ is amenable for every closed invariant set $D \subseteq \go$, then item (\ref{it2:lem1.1}) of Lemma~\ref{lem.1.1} follows.  Thus Lemma~\ref{lem.1.1} is a strengthening of Renault's comment.
            \end{rmk}

\begin{proof}
Fix a closed invariant set $D \subseteq \go$ and let $U=\go-D$.  Consider the following diagram:
\begin{equation}
\label{diagram2.0}
\xymatrix{0 \ar[r] & C^*(G|_{U}) \ar[d]_{\pi_{U}} \ar[r]^-{\iota} & C^*(G) \ar[d]_\pi \ar[r]^-{\rho}& C^*(G|_{D}) \ar[d]_{\pi_{D}} \ar[r]& 0\\
0 \ar[r] & C^*_r(G|_{U}) \ar[r]^-{\iota_r} & C^*_r(G) \ar[r]^-{\rho_r}& C^*_r(G|_{D}) \ar[r]& 0}
\end{equation}
 where $\pi_U, \pi$ and $\pi_D$ are the respective quotient maps, and $\iota, \iota_r$ and $\rho, \rho_r$ extend extension
and restriction respectively.  Since all of the maps involved are continuous, the diagram commutes.
We also have that the top row of (\ref{diagram2.0}) is exact by \cite[Lemma~2.10]{MRW96}.

(\ref{it2:lem1.1})$\Rightarrow$(\ref{it1:lem1.1}):
We show the surjective map $\pi_{D}$ is injective. Fix any $a\in C^*_r(G|_{D})$ with $\pi_D(a)=0$. Find $b\in C^*(G)$ with $\rho(b)=a$. From $\pi_D(\rho(b))=\rho_r(\pi(b))=0$,
 exactness of (\ref{diagram2.0}), surjectivity of $\pi_{U}$, and $\iota\circ\pi_{U}=\pi\circ\iota$ we obtain
\[
\pi(b)\in\ker \rho_r=\iota_r(C^*_r(G|_{U}))=\iota_r\circ\pi_{U}(C^*(G|_{U}))=\pi\circ\iota(C^*(G|_{U})).
\]
Find $c\in C^*(G|_{U})$ with $\pi(b)=\pi\circ\iota(c)$. As $\pi$ is an isomorphism by assumption we obtain that $b=\iota(c)$.
 Hence $a=\rho(b)=\rho\circ\iota(c)=0$, and $C^*(G|_{D}) = C^*_r(G|_{D})$.

(\ref{it1:lem1.1})$\Rightarrow$(\ref{it2:lem1.1}): By assumption the maps $\pi$ and $\pi_D$ are
isomorphisms.  Using the commutative diagram (\ref{diagram2.0}) and the exactness of the top line of that diagram, the exactness
of the bottom line follows from a easy diagram chase.
\end{proof}

Let $G$ be a second countable, locally compact, Hausdorff and \'etale groupoid and
$D$ be a closed invariant set of $\go$ and define $U=\go-D$.  Recall from Lemma~\ref{lem:new}(\ref{it4:review})
we have the communing diagram:
\begin{equation}
\label{com diag}
\xymatrix{0 \ar[r] & C^*_r(G|_{U}) \ar[d]_{E_{U}} \ar[r]^-{\iota_r} & C^*_r(G) \ar[d]_E \ar[r]^-{\rho_r}& C^*_r(G|_{D}) \ar[d]_{E_{D}} \ar[r]& 0\\
0 \ar[r] & C_0({U}) \ar[r]^-{\iota_0} & C_0(\go) \ar[r]^-{\rho_0}& C_0({D}) \ar[r]& 0.}
\end{equation}
Notice that the bottom row in \eqref{com diag} is exact.  We will use  this diagram several times.
We also use the notation  $\I{S}$ for the ideal in $C^*_r(G)$ generated by $S\subseteq C^*_r(G)$.

\begin{prop}
\label{cor.1.2}
Let $G$ be a second countable, locally compact, Hausdorff and \'etale groupoid such that $C^*(G) = C^*_r(G)$. Then the following properties are equivalent:
\begin{enumerate}
\item \label{it1:cor1.2} The $\cs$-algebra $C^*_r(G)$ is strongly purely infinite, and for every ideal $I$ in $C^*_r(G)$,
\[
I=\I{I\cap C_0(\go)}.
\]
\item \label{it2:cor1.2} For every closed invariant set $D \subseteq \go$, $G|_{D}$ is topologically principal; the sequence

\begin{equation}
 \label{exact}
\xymatrix{0 \ar[r] & C^*_r(G|_{U}) \ar[r]^-{\iota_r} & C^*_r(G) \ar[r]^-{\rho_r}& C^*_r(G|_{D}) \ar[r]& 0}
\end{equation}
is exact where $U = \go - D$, $\iota_r$ and $\rho_r$ are determined on continuous functions by extension and restriction
respectively;
and for every pair of elements $a,b$ in $C_0(\go)^+$ the 2-tuple $(a,b)$ has the matrix diagonalization property in $C^*_r(G)$.
\end{enumerate}
\end{prop}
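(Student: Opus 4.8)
The plan is to prove the two implications separately. I use throughout the commuting diagram \eqref{com diag} and the standard facts that, for a closed invariant $D\subseteq\go$ with $U=\go-D$, the map $\iota_r$ is injective and $\rho_r$ is surjective (so $C^*_r(G)/\ker\rho_r\cong C^*_r(G|_D)$), $\iota_r(C^*_r(G|_{U}))=\I{C_0(U)}$ is an ideal of $C^*_r(G)$, and $\I{C_0(U)}\cap C_0(\go)=\ker\rho_r\cap C_0(\go)=C_0(U)$. I also use: by Lemma~\ref{lem.1.1}, exactness of \eqref{exact} for every closed invariant $D$ is equivalent to $C^*(G|_D)=C^*_r(G|_D)$ for every such $D$; and, by the ideal-structure results of \cite{BCFS}, under the standing hypothesis $C^*(G)=C^*_r(G)$ the condition ``$I=\I{I\cap C_0(\go)}$ for every ideal $I$'' is equivalent to the conjunction of ``$G|_D$ is topologically principal for every closed invariant $D$'' and ``$C^*(G|_D)=C^*_r(G|_D)$ for every closed invariant $D$''.

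\smallskip
\noindent$(2)\Rightarrow(1)$: By (2) and the equivalences above, every ideal of $C^*_r(G)$ is of the form $\I{I\cap C_0(\go)}$, which is the second half of (1). For strong pure infiniteness I apply the filling-family criterion \cite[Proposition~3.13 and Lemma~3.12]{KS} to $\Ff:=C_0(\go)^+\subseteq C^*_r(G)^+$: it is closed under $\epsilon$-cut-downs since $(f-\epsilon)_+\in C_0(\go)^+$, and by the last hypothesis of (2) every pair in $\Ff\times\Ff$ has the matrix diagonalization property, so I only need $\Ff$ to be a filling family. So let $H$ be a hereditary subalgebra and $I$ a primitive ideal with $H\not\subseteq I$; using the ideal condition and exactness, write $I=\I{C_0(U)}=\ker\rho_r$ with $D=\go-U$, so $C^*_r(G)/I\cong C^*_r(G|_D)$ and $G|_D$ is topologically principal, and fix $a\in H^+\setminus I$. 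The crucial step is a \emph{relative} form of Lemmas~\ref{lem:star} and~\ref{lem:!}: there is a nonzero $h\in C_0(\go)^+$ with $h\notin I$ and $h\precsim a$ in $C^*_r(G)$. To obtain it I rerun the Anantharaman--Delaroche construction from the proof of Lemma~\ref{lem:star}: since $E_{D}$ is faithful (Lemma~\ref{lem:new}(\ref{it3:review}) applied to $G|_D$) and $\rho_0\circ E=E_{D}\circ\rho_r$, we have $\rho_0(E(a))\ne0$, so after rescaling $\|\rho_0(E(a))\|=1$ and there is $v_0\in D$ at which $E(a)$ is near $1$; choose $b\in C_c(G)\cap C^*_r(G)^+$ near $a$, put $K=\supp(b-E(b))\subseteq G\setminus\go$ and let $U'$ be the open subset of $\go$ where $E(b)$ is near $1$, so $v_0\in U'\cap D$; since $G|_D$ is topologically principal there is $v\in U'\cap D$ with $vGv=\{v\}$, so by the argument of \cite[Lemma~2.3]{A-D97} there is an open $V$ with $v\in V\subseteq U'$ and $VKV=\emptyset$; pick $f\in C_c(\go)$ with $0\le f\le1$, $\supp f\subseteq V$ and $f\equiv1$ near $v$. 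As in the proof of Lemma~\ref{lem:star} one has $fbf=fE(b)f$, hence $\|faf-fE(a)f\|<\tfrac12$ and $\|\rho_0(fE(a)f)\|\ge E(a)(v)$ stays bounded away from $\tfrac12$; thus $h:=(fE(a)f-\tfrac12)_+\in C_0(\go)^+$ has $\rho_0(h)\ne0$ (so $h\notin I$) and, by \cite[Lemma~2.2]{KR-infty}, $h=g^*(faf)g\precsim faf\precsim a$. With this relative lemma established, fix $0<\delta<\|\rho_0(h)\|$; then $(h-\delta)_+\in\Ff$ with $(h-\delta)_+\notin I$, and $(h-\delta)_+=d^*ad$ for some $d\in C^*_r(G)$ by \cite[Lemma~2.2]{KR-infty}, so $z:=d^*a^{1/2}$ satisfies $zz^*=(h-\delta)_+\in\Ff$, $(h-\delta)_+\notin I$, and $z^*z=a^{1/2}dd^*a^{1/2}\in\overline{a\,C^*_r(G)\,a}\subseteq H$. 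Hence $\Ff$ is a filling family and $C^*_r(G)$ is strongly purely infinite.

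\smallskip
\noindent$(1)\Rightarrow(2)$: Condition (c) is immediate, since strong pure infiniteness means that \emph{every} pair in $C^*_r(G)^+$ has the matrix diagonalization property, in particular every pair in $C_0(\go)^+$. For exactness, fix a closed invariant $D$; as $\ker\rho_r$ is an ideal, (1) gives $\ker\rho_r=\I{\ker\rho_r\cap C_0(\go)}=\I{C_0(U)}=\iota_r(C^*_r(G|_U))$, which with $\iota_r(C^*_r(G|_U))\subseteq\ker\rho_r$, injectivity of $\iota_r$ and surjectivity of $\rho_r$ gives exactness of \eqref{exact}. Exactness for all $D$ yields $C^*(G|_D)=C^*_r(G|_D)$ for all $D$ by Lemma~\ref{lem.1.1}, so the ideal-structure results of \cite{BCFS} show that ``$I=\I{I\cap C_0(\go)}$ for all $I$'' forces $G|_D$ to be topologically principal for every closed invariant $D$, which is the remaining assertion of (2).

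\smallskip
The main obstacle is the relative form of Lemmas~\ref{lem:star} and~\ref{lem:!}: one must obtain $h\precsim a$ honestly inside $C^*_r(G)$, not merely modulo $I$ (which is all a naive application of Lemma~\ref{lem:!} to the quotient groupoid $G|_D$ delivers, and which is not enough to realize a $C_0(\go)$-element as $zz^*$), while at the same time keeping $h\notin I$. This is precisely why the cut-off function in the Anantharaman--Delaroche construction must be localized near a point of $D$, and it is there that topological principality of the restriction $G|_D$, rather than of $G$ alone, is used. A secondary point is pinning down the exact ideal-theoretic statement from \cite{BCFS} that links ``every ideal is induced from $C_0(\go)$'' to topological principality of all restrictions.
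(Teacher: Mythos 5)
Your proposal is correct, and its overall architecture coincides with the paper's: the implication $(1)\Rightarrow(2)$ (exactness from the ideal condition via $\ker\rho_r\cap C_0(\go)=C_0(U)$, topological principality of the restrictions via \cite{BCFS} and Lemma~\ref{lem.1.1}, matrix diagonalization from strong pure infiniteness) and the reduction of $(2)\Rightarrow(1)$ to showing that $\Ff=C_0(\go)^+$ is a filling family via \cite[Lemma~3.12 and Proposition~3.13]{KS} are the same as in the paper. Where you genuinely diverge is the verification that $\Ff$ is filling. The paper passes to the quotient: it applies Lemma~\ref{lem:star} to $c=\rho_r(d)$ inside $C^*_r(G|_D)$ (using that $G|_D$ is topologically principal), obtains $h=a^*fcfa\in C_0(D)^+$, lifts $h$ to some $b\in C_0(\go)^+$, writes $b=w^*dw+v$ with $v\in I$, kills $v$ with an element $e=1-e_{\lambda_0}$ built from an approximate unit of $I$ lying in $C_0(U)$, and then cuts down once more to land back in $C_0(\go)^+$ off $I$. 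You instead stay upstairs in $C^*_r(G)$ and prove a relative version of Lemmas~\ref{lem:star} and~\ref{lem:!}: you rerun the Anantharaman--Delaroche construction but place the trivial-isotropy point $v$ inside $U'\cap D$ (using that isotropy in $G|_D$ at $u\in D$ equals isotropy in $G$, so topological principality of $G|_D$ supplies such a $v$), which directly yields $h\in C_0(\go)^+$ with $h=w^*aw$ exactly and $\rho_0(h)(v)>0$, hence $h\notin I$; the cut-down $(h-\delta)_+=d^*ad$ with $\delta<\|\rho_0(h)\|$ then gives $z=d^*a^{1/2}$ with $zz^*\in\Ff\setminus I$ and $z^*z\in\overline{a\,C^*_r(G)\,a}\subseteq H$. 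Your route buys a cleaner conclusion (no lifting and no approximate-unit correction, and the exact factorization $h=w^*aw$ comes out in one step) at the cost of reproving the localization lemma in relative form rather than quoting Lemma~\ref{lem:star} as a black box applied to the groupoid $G|_D$; both routes use exactly the same hypotheses of $(2)$. The only points worth tightening are cosmetic: the claim that strong pure infiniteness gives the matrix diagonalization property for all pairs of positive elements is what the paper extracts from \cite[Lemma~5.8]{KR-infty} rather than literally the definition, and your factorization $(h-\delta)_+=d^*ad$ should be justified by applying \cite[Lemma~2.2]{KR-infty} to $x=y=w^*aw$ (so $d=wd'$ for the contraction $d'$ it produces), which is exactly how you use it.
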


\begin{proof}
(\ref{it1:cor1.2})$\Rightarrow$(\ref{it2:cor1.2}): Fix a closed invariant set
$D \subseteq \go$ and $U=\go-D$. For this $D$ and $U$ we have a commuting diagram as
in \eqref{com diag}.
Define $I:=\ker\rho_r\subseteq C^*_r(G)$. Using the diagram, $\rho_0(E(I))=E_{D}(\rho_r(I))=0$, implying that
$E(I)\subseteq \iota_0(C_0({U}))$. Since $E(b)=b$ for $b\in C_0(\go)$, $I\cap C_0(\go)\subseteq E(I)$.
Using assumption (\ref{it1:cor1.2}) we have $I=\I{I\cap C_0(\go)}$. Hence
\[
\ker\rho_r=I=\I{I\cap C_0(\go)}\subseteq \I{E(I)}\subseteq \I{\iota_0(C_0(U))}\subseteq
\iota_r(C^*_r(G|_{U}));
\] that is $\ker\rho_r \subseteq \operatorname{image}(\iota_r)$.
Thus \eqref{exact} is exact.

We know that each $G|_{D}$ is topologically principal by \cite[Remark 5.10]{BCFS} provided that
$C^*(G|_{D}) = C^*_r(G|_{D})$. The latter follows from Lemma \ref{lem.1.1} since \eqref{exact} is exact.

Since $C^*_r(G)$ is strongly purely infinite, Lemma~5.8 in \cite{KR-infty} implies that every
pair $(a,b)$ of positive elements in $C_0(\go)$  has the matrix diagonalization property in $C^*_r(G)$.

(\ref{it2:cor1.2})$\Rightarrow$(\ref{it1:cor1.2}): Since we assumed that $G|_D$ is topologically principal for all closed
 invariant  $D\subseteq \go$,  by the proof of Corollary~5.9 in \cite{BCFS} (see also
\cite[Remark~5.10]{BCFS}), we know $I=\I{I\cap C_0(\go)}$ for every ideal $I$ in $C^*_r(G)=C^*(G)$ provided
that  $C^*(G|_{D}) = C^*_r(G|_{D})$ for every closed invariant set $D \subseteq \go$. But this follows from Lemma~\ref{lem.1.1} since $C^*(G) = C^*_r(G)$ and \eqref{exact} is
exact, which are assumed in (\ref{it2:cor1.2}). Hence (\ref{it2:cor1.2}) implies $I=\I{I\cap C_0(\go)}$.

We prove $C^*_r(G)$ is strongly purely infinite. Define $\Ff:=C_0(\go)^+\subseteq C^*_r(G)$.
By functional calculus we know
\[
f(a)\in \Ff,\ \ \ \text{for}\ \ \ f\in C_0(\RR)^+, \ \ \ a\in \Ff.
\]
In particular $\Ff$ is closed under $\varepsilon$-cut-downs, i.e., for each $a\in \Ff$,
and $\varepsilon\in(0,\|a\|)$ we have $(a-\varepsilon)_+\in \Ff$. By (\ref{it2:cor1.2}) each pair $(a,b)$
 with $a,b\in\Ff$ has the matrix diagonalization property (of \cite[Definition 3.3]{KS}).
Now by Lemma 3.12 of \cite{KS} we know that $\Ff$ has the matrix diagonalization property
of \cite[Definition 3.10(ii)]{KS}. If follows from Proposition 3.13 of \cite{KS} that $C^*_r(G)$
is strongly purely infinite provided that $\Ff$ is a filling family for $C^*_r(G)$, which we now show.

Fix any hereditary $\cs$-subalgebra $H$ of $C^*_r(G)$ and any ideal $I$ of $C^*_r(G)$ with
$H \not\subseteq I$. We know $I=\I{I\cap C_0(\go)}$, hence $I=\iota_r(C^*_r(G|_{U}))$ for some open invariant set $U\subseteq \go$.
Let $D=\go-U$
 and consider the corresponding commuting diagram \eqref{com diag}.

Select $d\in H^+$, $d\not\in I$.  Define $c:=\rho_r(d)$. As $d\notin I=\ker\rho_r$ by  exactness in (2),
we know $\rho(d)\neq 0$. Since $E_D$ is faithful and $c$ positive,
\[
\epsilon := \frac{1}{4}\|E_D(c)\|>0.
\]
By (\ref{it2:cor1.2}) the groupoid $G|_{D}$ is
topologically principal, hence Lemma~\ref{lem:star} gives $f \in C_0(D)^+$ such that
\[
\|f\| = 1, \ \ \|fcf - fE_D(c)f\| < \epsilon, \ \ \|fE_D(c)f\| > \|E_D(c)\| - \epsilon.
\]
Recall \cite[Lemma~2.2]{KR-infty}: For $x,y$ positive and $\delta>0$ with $\|x-y\|<\delta$ there
exist a contraction $a$ with $a^*xa=(y-\delta)_+$. Use this to find a contraction $a \in \cs_r(G|_{D})$ such that
$$h:= a^*fcfa= (fE_D(c)f - \epsilon)_+ \in C_0(D)^+.$$
Notice that \[\|h\| \geq \|fE_D(c)f\| - \epsilon > \|E_D(c)\| - 2\epsilon>0.\] Using that $\rho_r$ restricts to
the map $C_0(G^{(0)})\to C_0(D)$, select $b\in C_0(\go)^+$ such that $\rho_r(b)=h$. Also as $\rho_r$ is surjective
find $w\in C^*_r(G)$ such that $\rho_r(w)=fa$. Since $\rho_r(b-w^*dw)=h-a^*fcfa=0$ we have $b=w^*dw+v$ for some $v\in I$.
Let $\{e_\lambda\}$ denote an approximate unit of $I=C^*_r(G|_{U})$ with $e_\lambda\in C_0({U})$ (see Lemma~\ref{lem:new}). Let $1$ be the unit of the
unitisation of  $C^*_r(G)$. Then $(1-e_\lambda)v(1-e_\lambda)\to 0$. For suitable $\lambda_0$
and $e:=1-e_{\lambda_0}$ we get \[\|ew^*dwe-ebe\|=\|eve\|< \epsilon.\] Use Lemma~2.2 of \cite{KR-infty} to find a contraction $u\in C^*_r(G)$ such that
\[
g:= u^*ew^*dweu= (ebe - \epsilon)_+ \in C_0(\go)^+=\Ff.
\]
Since $be_{\lambda_0}+e_{\lambda_0}b-e_{\lambda_0}be_{\lambda_0}\in C_0({U})\subseteq \ker\rho_r$
we obtain that $\rho_r(ebe)=\rho_r(b)=h$. Moreover by functional calculus we know $(h - \epsilon)_+=(fE_D(c)f - 2\epsilon)_+$. We conclude
\[
\|\rho_r(g)\|=\|(h - \epsilon)_+\|=\|(fE_D(c)f - 2\epsilon)_+\|\geq \|fE_D(c)f\| - 2\epsilon>\|E_D(c)\| - 3\epsilon>0,
\]
ensuring $g\not\in I$. Finally with $z:=u^*ew^*d^{1/2}\in C^*_r(G)$ we obtain $g=z{z}^*$ and
${z}^*z\in H$. By definition $\Ff$ is a filling family for $C^*_r(G)$ completing the proof.
\end{proof}



\end{document}